%
%
%


\documentclass{conm-p-l}




\usepackage{hyperref}


\newtheorem{theorem}{Theorem}[section]
\newtheorem{lemma}[theorem]{Lemma}

\theoremstyle{definition}

\theoremstyle{remark}
\newtheorem{remark}[theorem]{Remark}

\numberwithin{equation}{section}

\begin{document}

\title
[Analysis of first order systems]
{Analysis of first order systems of partial differential equations}


\author{Yan-Long Fang}
\address{Department of Mathematics,
University College London,
Gower Street,
London WC1E 6BT,
UK}
\email{yan.fang.12@ucl.ac.uk}

\author{Dmitri Vassiliev}
\address{Department of Mathematics,
University College London,
Gower Street,
London WC1E 6BT,
UK}
\email{D.Vassiliev@ucl.ac.uk}
\thanks{\url{http://www.homepages.ucl.ac.uk/\~ucahdva/};
Dmitri Vassiliev was supported by EPSRC grant EP/M000079/1}

\subjclass[2010]{Primary 35P20; Secondary 35J46, 35R01, 35Q41.}

\date{}

\begin{abstract}
The paper deals with a formally self-adjoint first order linear differential
operator acting on $m$-columns of complex-valued half-densities
over an $n$-manifold without boundary.
We study the distribution of eigenvalues in the elliptic setting
and the propagator in the hyperbolic setting,
deriving two-term asymptotic formulae for both.
We then turn our attention to the special case of a two by two operator in
dimension four. We show that
the geometric concepts of Lorentzian metric,
Pauli matrices,
spinor field,
connection coefficients for spinor fields,
electromagnetic covector potential, Dirac equation and Dirac action
arise naturally in the process of our analysis.
\end{abstract}

\maketitle

\section{The playing field}
\label{The playing field}

Let $L$ be a formally self-adjoint first order linear differential
operator acting on $m$-columns
$v=\begin{pmatrix}v_1&\ldots&v_m\end{pmatrix}^T$
of complex-valued half-densities
over a connected $n$-dimensional manifold $M$ without boundary.
Throughout this paper we assume that $m,n\ge2\,$.
The coefficients of the operator $L$ are assumed to be infinitely smooth.

In local coordinates $x=(x^1,\ldots,x^n)$ our operator reads
\begin{equation}
\label{operator in local coordinates}
L=P^\alpha(x)\frac\partial{\partial x^\alpha}+Q(x),
\end{equation}
where $P^\alpha(x)$ and $Q(x)$ are some $m\times m$ matrix-functions
and summation is carried out over $\alpha=1,\ldots,n$.
The full symbol of the operator $L$ is the matrix-function
\begin{equation}
\label{definition of the full symbol}
L(x,p):=iP^\alpha(x)\,p_\alpha+Q(x),
\end{equation}
where $p=(p_1,\ldots,p_n)$ is the dual variable (momentum).
Note that
the tradition in microlocal analysis is to denote momentum by $\xi$.
We choose to denote it by $p$ instead because
in Sections
\ref{Gauge-theoretic analysis in 3D}
and
\ref{A non-geometric representation of the massless Dirac action in 3D}
we will
need the letter $\xi$ for the spinor.

The problem with the full symbol is that it is not invariant under changes
of local coordinates. In order to overcome this problem we decompose
the full symbol into components homogeneous in momentum,
$L(x,p)=L_1(x,p)+L_0(x)$,
\begin{equation}
\label{definition of the homogeneous components of the full symbol}
L_1(x,p):=iP^\alpha(x)\,p_\alpha,
\qquad
L_0(x):=Q(x),
\end{equation}
and define the principal and subprincipal symbols as
\begin{equation}
\label{definition of the principal symbol}
L_\mathrm{prin}(x,p):=L_1(x,p),
\end{equation}
\begin{equation}
\label{definition of the subprincipal symbol}
L_\mathrm{sub}(x):=L_0(x)
+\frac i2(L_\mathrm{prin})_{x^\alpha p_\alpha}(x),
\end{equation}
where the subscripts indicate partial derivatives.
It is known that $L_\mathrm{prin}$ and $L_\mathrm{sub}$
are invariantly defined matrix-functions on $T^*M$ and $M$ respectively,
see subsection 2.1.3
in \cite{mybook} for details.
As we assumed our operator $L$ to be formally self-adjoint,
the matrix-functions $L_\mathrm{prin}$ and $L_\mathrm{sub}$
are Hermitian.

Examination of formulae
(\ref{operator in local coordinates})--(\ref{definition of the subprincipal symbol})
shows that
$L_\mathrm{prin}(x,p)$ and $L_\mathrm{sub}(x)$
uniquely determine the first order differential operator $L$.
Thus, the notions of principal symbol and subprincipal
symbol provide an invariant analytic way of describing a first order
differential operator.

We say that a formally self-adjoint first order differential operator $L$ is
\emph{elliptic}~if
\begin{equation}
\label{definition of ellipticity}
\det L_\mathrm{prin}(x,p)\ne0,\qquad\forall(x,p)\in T^*M\setminus\{0\},
\end{equation}
and \emph{non-degenerate} if
\begin{equation}
\label{definition of non-degeneracy}
L_\mathrm{prin}(x,p)\ne0,\qquad\forall(x,p)\in T^*M\setminus\{0\}.
\end{equation}
The ellipticity condition (\ref{definition of ellipticity})
is a standard condition in the spectral theory of differential operators,
see, for example, \cite{jst_part_a}.
Our non-degeneracy condition
(\ref{definition of non-degeneracy})
is less restrictive, and we will see later, in Section \ref{Dimension four is special},
that in the special case $m=2$ and $n=4$
this condition describes a hyperbolic operator.

\section{Distribution of eigenvalues and the propagator}
\label{Distribution of eigenvalues and the propagator}

In this section we assume that the manifold $M$ is compact (and without boundary)
and that the operator $L$ is elliptic, see (\ref{definition of ellipticity}).

\begin{remark}
\label{remark about m being even}
Ellipticity and the fact that dimension $n$ is greater than or equal to two imply that $m$,
the number of equations, is even. Indeed, let us fix an arbitrary point $x\in M$
and consider $L_\mathrm{prin}(x,p)$ as a function of momentum
$p\in T_x^*M$.
As throughout this paper
the operator $L$ is assumed to be formally self-adjoint,
the matrix-function $L_\mathrm{prin}(x,p)$ is Hermitian, 
and, hence, $\det L_\mathrm{prin}(x,p)$ is real.
For $n\ge2$ the set
$T_x^*M\setminus\{0\}$ is connected,
so the ellipticity condition \eqref{definition of ellipticity}
implies that the polynomial $\det L_\mathrm{prin}(x,p)$
preserves sign on $T_x^*M\setminus\{0\}$.
But our $m\times m$ matrix-function $L_\mathrm{prin}(x,p)$ is linear in $p$, so
$\det L_\mathrm{prin}(x,-p)=(-1)^m\det L_\mathrm{prin}(x,p)$,
therefore the sign of $\det L_\mathrm{prin}(x,p)$ can only be preserved if $m$
is even.
\end{remark}

Let $h^{(j)}(x,p)$ be the eigenvalues of the principal symbol. We
assume that these are
simple for all $(x,p)\in T^*M\setminus\{0\}$.
We enumerate the eigenvalues of the principal symbol
$h^{(j)}(x,p)$ in increasing order, using
a negative index $j=-m/2,\ldots,-1$ for negative $h^{(j)}(x,p)$
and
a positive index $j=1,\ldots,m/2$ for positive $h^{(j)}(x,p)$.

It is known that our differential operator $L$ has a discreet
spectrum accumulating to $+\infty$ and to $-\infty$.
Let $\lambda_k$ and $v_k=\begin{pmatrix}v_{k1}(x)&\ldots&v_{km}(x)\end{pmatrix}^T$ be
the eigenvalues and eigenfunctions of the operator $L$; the
particular enumeration of these eigenvalues
(accounting for multiplicities)
is irrelevant for our purposes.

We will be studying the following two objects.

\

\textbf{Object 1.}
Our first object of study is the \emph{propagator},
which is the one-parameter family of operators defined as
\begin{multline}
\label{definition of wave group}
U(x^{n+1}):=e^{-ix^{n+1}L}
\\
=\sum_k e^{-ix^{n+1}\lambda_k}\,v_k(x^1,\ldots,x^n)\int_M[v_k(y^1,\ldots,y^n)]^*(\,\cdot\,)\,dy^1\ldots dy^n\,,
\end{multline}
where $x^{n+1}\in\mathbb{R}$ is an additional `time' coordinate.
The propagator provides a solution to the Cauchy problem
\begin{equation}
\label{initial condition most basic}
\left.w\right|_{x^{n+1}=0}=v
\end{equation}
for the hyperbolic system
\begin{equation}
\label{dynamic equation most basic}
(-i\partial/\partial x^{n+1}+L)w=0\,.
\end{equation}
Namely, it is easy to see that if the column of half-densities $v=v(x^1,\ldots,x^n)$
is infinitely smooth,
then, setting
$\,w:=U(x^{n+1})\,v$, we get a `time-dependent' column of half-densities $w(x^1,\ldots,x^n,x^{n+1})$
which is also infinitely smooth
and which satisfies the equation
(\ref{dynamic equation most basic})
and the initial condition
(\ref{initial condition most basic}).
The use of the letter ``$U$'' for the propagator is motivated by the
fact that for each $x^{n+1}$ the operator $U(x^{n+1})$ is unitary.

Note that the operator $-i\partial/\partial x^{n+1}+L$ appearing
in the LHS of formula (\ref{dynamic equation most basic})
is a formally self-adjoint $m\times m$ first order differential
operator on the $(n+1)$-dimensional manifold $M\times\mathbb{R}$.
Moreover, it is easy to see that
this `extended' operator $-i\partial/\partial x^{n+1}+L$ automatically satisfies the
non-degeneracy condition from Section~\ref{The playing field}.

\

\textbf{Object 2.}
Our second object of study is the \emph{counting function}
\begin{equation}
\label{definition of counting function}
N(\lambda):=\,\sum_{0<\lambda_k<\lambda}1\,.
\end{equation}
In other words, $N(\lambda)$ is the number of eigenvalues $\lambda_k$
between zero and a positive~$\lambda$.

Here it is natural to ask the question: why, in defining the counting function
(\ref{definition of counting function}),
did we choose to count
all \emph{positive} eigenvalues up to a given positive $\lambda$
rather than
all \emph{negative} eigenvalues up to a given negative $\lambda$?
There is no particular reason. One case reduces to the other by the change
of operator $L\mapsto-L$. This issue is known as
\emph{spectral asymmetry} and is discussed in
\cite{atiyah_part_3},
as well as in Section~10 of \cite{jst_part_a}
and in \cite{torus}.

\

Our objectives are as follows.

\

\textbf{Objective 1.}
We aim to construct the propagator
(\ref{definition of wave group})
explicitly in terms of
oscillatory integrals, modulo an integral operator with an
infinitely smooth, in the variables $x^1,\ldots,x^n,x^{n+1},y^1,\ldots,y^n$, integral kernel.

\

\textbf{Objective 2.}
We aim to derive a
two-term asymptotic expansion for the counting function
(\ref{definition of counting function})
\begin{equation}
\label{two-term asymptotic formula for counting function}
N(\lambda)=a\lambda^n+b\lambda^{n-1}+o(\lambda^{n-1})
\end{equation}
as $\lambda\to+\infty$, where $a$ and $b$ are some real constants.
More specifically, our objective is to write down explicit formulae
for the asymptotic coefficients $a$ and $b$.

Here one has to have in mind that the two-term asymptotic
expansion (\ref{two-term asymptotic formula for counting function})
holds only under appropriate assumptions on periodic trajectories,
see Theorem~8.4 from \cite{jst_part_a} for details.
In order to avoid dealing with the issue of periodic trajectories,
in this paper we understand the asymptotic expansion
(\ref{two-term asymptotic formula for counting function})
in a regularised fashion.
One way of regularising
the asymptotic formula (\ref{two-term asymptotic formula for counting function})
is to take a convolution with a function from Schwartz space
$\mathcal{S}(\mathbb{R})$; see Theorem~7.2 in \cite{jst_part_a} for details.
Alternatively, one can look at the eta function
$\,\eta(s):=\sum|\lambda_k|^{-s}\operatorname{sign}\lambda_k\,$,
where summation is carried out over all nonzero eigenvalues
$\lambda_k$ and $s\in\mathbb{C}$ is the independent variable.
The series converges absolutely for
$\operatorname{Re}s>n$ and defines a holomorphic function in this half-plane.
Moreover, it is known
\cite{atiyah_part_3}
that the eta function extends meromorphically to the whole $s$-plane
with simple poles.
Formula (10.6) from \cite{jst_part_a} implies
that the eta function does not have a pole at $s=n$ and
that the residue at 
$s=n-1$ is $2(n-1)b$,
where $b$ is the coefficient from
(\ref{two-term asymptotic formula for counting function}).

\

It is well known that the above two objectives are closely
related: if one achieves Objective 1, then Objective 2 follows via a
Fourier transform in the variable $x^{n+1}$, see Sections 6 and 7
in \cite{jst_part_a}.

\

We are now in a position to state our results.

\

\textbf{Result 1.}
We construct the propagator as a sum of $m$ oscillatory integrals
(Fourier integral operators)
\begin{equation*}
\label{wave group as a sum of oscillatory integrals}
U(x^{n+1})\overset{\operatorname{mod}C^\infty}=
\sum_j
U^{(j)}(x^{n+1})\,,
\end{equation*}
where the phase function of each oscillatory integral
$U^{(j)}(x^{n+1})$ is associated with the corresponding
Hamiltonian $h^{(j)}(x^1,\ldots,x^n,p_1,\ldots,p_n)$
and summation is
performed
over
nonzero integers $j$
from $-m/2$ to $+m/2$.
The notion of a phase function associated with a Hamiltonian is
defined in Section 2 of \cite{jst_part_a}
and Section~2.4 of \cite{mybook}.

We will now write down explicitly the principal symbol of the oscillatory
integral $U^{(j)}(x^{n+1})$.
The notion of a principal symbol of an oscillatory
integral is defined in accordance with Definition 2.7.12 from
\cite{mybook}.
The principal symbol of the oscillatory
integral $U^{(j)}(x^{n+1})$ is a complex-valued
$m\times m$ matrix-function on
\[
M\times\mathbb{R}\times(T^*M\setminus\{0\}).
\]
We denote the arguments of this principal symbol by
$x^1,\ldots,x^n$ (local coordinates on~$M$),
$x^{n+1}$ (`time' coordinate on $\mathbb{R}$),
$y^1,\ldots,y^n$ (local coordinates on $M$)
and
$q_1,\ldots,q_n$ (variable dual to $y^1,\ldots,y^n$).

Further on in this section we use $x$, $y$, $p$ and $q$ as shorthand for
$\,x^1,\ldots,x^n$,
$\,y^1,\ldots,y^n$,
$\,p_1,\ldots,p_n$
and
$\,q_1,\ldots,q_n$
respectively.
The additional `time' coordinate $x^{n+1}$ will always be written separately.

In order to write down the principal symbol of the oscillatory
integral $U^{(j)}(x^{n+1})$ we need to introduce some auxiliary objects first.

Curly brackets will denote the Poisson bracket on matrix-functions
$\{P,R\}:=P_{x^\alpha}R_{p_\alpha}-P_{p_\alpha}R_{x^\alpha}$
and its further generalisation
\begin{equation}
\label{definition of generalised Poisson bracket}
\{F,G,H\}:=F_{x^\alpha}GH_{p_\alpha}-F_{p_\alpha}GH_{x^\alpha},
\end{equation}
where the subscripts $x^\alpha$ and $p_\alpha$
indicate partial derivatives and
the repeated index $\alpha$ indicates summation over $\alpha=1,\ldots,n$.

Let $v^{(j)}(x,p)$ be the normalised eigenvector of the principal
symbol $L_\mathrm{prin}(x,p)$ corresponding to the eigenvalue
$h^{(j)}(x,p)$. We define the scalar function
$f^{(j)}:T^*M\setminus\{0\}\to\mathbb{R}$ in accordance with the formula
\begin{equation*}
\label{phase appearing in principal symbol}
f^{(j)}:=[v^{(j)}]^*L_\mathrm{sub}v^{(j)}
-\frac i2
\{
[v^{(j)}]^*,L_\mathrm{prin}-h^{(j)},v^{(j)}
\}
-i[v^{(j)}]^*\{v^{(j)},h^{(j)}\}.
\end{equation*}

By
$(x^{(j)}(x^{n+1};y,q),p^{(j)}(x^{n+1};y,q))$ we denote the Hamiltonian trajectory
originating from the point $(y,q)$, i.e.~solution of the system of
ordinary differential equations (the dot denotes differentiation in $x^{n+1}$)
\begin{equation*}
\label{Hamiltonian system of equations}
\dot x^{(j)}=h^{(j)}_p(x^{(j)},p^{(j)}),
\qquad
\dot p^{(j)}=-h^{(j)}_x(x^{(j)},p^{(j)})
\end{equation*}
subject to the initial condition $\left.(x^{(j)},p^{(j)})\right|_{x^{n+1}=0}=(y,q)$.

The formula for the principal symbol of the oscillatory integral
$U^{(j)}(x^{n+1})$ is known
\cite{SafarovDSc,NicollPhD,jst_part_a}
and reads as follows:
\begin{multline}
\label{formula for principal symbol of oscillatory integral}
[v^{(j)}(x^{(j)}(x^{n+1};y,q),p^{(j)}(x^{n+1};y,q))]
\,[v^{(j)}(y,q)]^*
\\
\times\exp
\left(
-i\int_0^{x^{n+1}}f^{(j)}(x^{(j)}(\tau;y,q),p^{(j)}(\tau;y,q))\,d\tau
\right).
\end{multline}
This principal symbol is positively homogeneous in momentum $q$
of degree zero.

Let us now examine the lower order terms of the symbol of the
oscillatory integral $U^{(j)}(x^{n+1})$. The algorithm described
in Section 2 of \cite{jst_part_a} provides a recursive procedure
for the calculation of all lower order terms, of any degree of homogeneity
in momentum $q$. However, there are two issues here. Firstly, calculations
become very complicated.
Secondly, describing these lower
order terms in an invariant way is problematic because, as far as
the authors are aware, the concept of subprincipal symbol has
never been defined for time-dependent oscillatory integrals
(Fourier integral operators). We thank Yuri Safarov for
drawing our attention to the latter issue.

We overcome the problem of invariant description of lower order
terms of the symbol of the oscillatory integral $U^{(j)}(x^{n+1})$
by restricting our analysis to $U^{(j)}(0)$. It turns out that
knowing the properties of the lower order terms of the symbol of
$U^{(j)}(0)$ is sufficient for the derivation of the two-term asymptotic
expansion (\ref{two-term asymptotic formula for counting function}).
And $U^{(j)}(0)$ is a pseudodifferential operator, so one can use
here the standard notion of subprincipal symbol of a
pseudodifferential operator,
see subsection 2.1.3 in \cite{mybook} for definition.

The following result was established recently in \cite{jst_part_a}.

\begin{theorem}
\label{theorem2p1}
\begin{equation}
\label{subprincipal symbol of OI at time zero}
\operatorname{tr}[U^{(j)}(0)]_\mathrm{sub}
=-i\{[v^{(j)}]^*,v^{(j)}\}.
\end{equation}
\end{theorem}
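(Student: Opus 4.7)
The plan is to exploit the fact that at $x^{n+1}=0$ the oscillatory integral $U^{(j)}(0)$ is a pseudodifferential operator (the phase degenerates to $(x-y)\cdot q$), so the standard notion of subprincipal symbol applies, and then to use the normalisation $[v^{(j)}]^*v^{(j)}=1$ to collapse the trace to a single contribution.

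First, I would set $x^{n+1}=0$ in formula (\ref{formula for principal symbol of oscillatory integral}): the trajectory is trivial, the exponential becomes unity, and the principal symbol of $U^{(j)}(0)$ is the rank-one orthogonal projector $v^{(j)}(x,q)[v^{(j)}(x,q)]^*$. Writing the full amplitude as $u^{(j)}=u^{(j)}_0+u^{(j)}_{-1}+\ldots$ and applying the standard conversion from an amplitude $u^{(j)}(0,x,y,q)$ to a pseudodifferential left symbol, followed by the correction to the subprincipal symbol, I would obtain $[U^{(j)}(0)]_\mathrm{sub}$ as the sum of $u^{(j)}_{-1}(0,x,x,q)$, a term proportional to $\partial_{q_\alpha}\partial_{y^\alpha}u^{(j)}_0(0,x,y,q)|_{y=x}$, and the term $\frac{i}{2}\partial_{x^\alpha}\partial_{q_\alpha}(v^{(j)}[v^{(j)}]^*)$. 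Taking the trace kills the last two pieces, because $\operatorname{tr}(v^{(j)}[v^{(j)}]^*)=[v^{(j)}]^*v^{(j)}=1$ is constant in all its arguments, so all of its mixed partial derivatives vanish. Consequently $\operatorname{tr}[U^{(j)}(0)]_\mathrm{sub}=\operatorname{tr}u^{(j)}_{-1}(0,x,x,q)$.

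Second, I would determine $u^{(j)}_{-1}$ at $x^{n+1}=0$ from the parametrix construction. The equation $(-i\partial_{x^{n+1}}+L)U^{(j)}\equiv 0\pmod{C^\infty}$ yields a first-order transport equation for $u^{(j)}_{-1}$ along the Hamiltonian flow of $h^{(j)}$, whose source is built from $L_\mathrm{sub}$, from $(L_\mathrm{prin}-h^{(j)})$ acting on derivatives of $u^{(j)}_0$, and from subleading phase corrections; the initial data at $x^{n+1}=0$ are pinned down by $\sum_j U^{(j)}(0)\equiv I\pmod{C^\infty}$ together with the projector structure of each branch. After left-multiplying by $[v^{(j)}]^*$, right-multiplying by $v^{(j)}$, and tracing, the $(L_\mathrm{prin}-h^{(j)})$-contribution annihilates via $(L_\mathrm{prin}-h^{(j)})v^{(j)}=0$, the $L_\mathrm{sub}$-piece cancels against the corresponding term in $f^{(j)}$ that enters the leading transport, and what remains is precisely $-i\{[v^{(j)}]^*,v^{(j)}\}$.

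The main obstacle lies in the second step: carrying through the parametrix construction with enough care to pin down the initial data of $u^{(j)}_{-1}$. As the paper notes, there is no invariant notion of subprincipal symbol for a general time-dependent Fourier integral operator, so the computation must be performed in local coordinates at $x^{n+1}>0$ and then evaluated at the initial time, with the splitting $U=\sum_j U^{(j)}$ normalised branch-by-branch. The final algebraic collapse rests on $[v^{(j)}]^*v^{(j)}=1$, on $(L_\mathrm{prin}-h^{(j)})v^{(j)}=0$, and on the cyclicity of the trace, and it is these cancellations that reduce a superficially unwieldy expression to the single Poisson bracket on the right-hand side of (\ref{subprincipal symbol of OI at time zero}).
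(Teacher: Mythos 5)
The paper does not prove Theorem~\ref{theorem2p1}: it is stated as a recently-established result and cited to~\cite{jst_part_a}, so there is no in-paper argument for you to be measured against. Your proposal must therefore be judged as a self-contained sketch, and as such it is incomplete in two respects.

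First, the claim that the trace annihilates the amplitude-to-left-symbol corrections because $\operatorname{tr}(v^{(j)}[v^{(j)}]^*)=1$ is constant depends on a tacit choice of how the full amplitude $u^{(j)}_0(0,x,y,q)$ extends $v^{(j)}[v^{(j)}]^*$ off the diagonal $x=y$. For the extension you implicitly use (constant in $x$) the mixed $\partial_{y^\alpha}\partial_{q_\alpha}$ term indeed has vanishing trace, but for another admissible extension, say $v^{(j)}(x,q)[v^{(j)}(y,q)]^*$, the trace $\partial_{q_\alpha}\partial_{y^\alpha}\bigl([v^{(j)}(y,q)]^*v^{(j)}(x,q)\bigr)\big|_{y=x}$ does not vanish, and the discrepancy is absorbed into $u^{(j)}_{-1}$; since $[U^{(j)}(0)]_\mathrm{sub}$ is invariant under this choice, you must fix it and carry that choice consistently through the transport equation. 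Second, and more seriously, your entire second step --- deriving $\operatorname{tr}u^{(j)}_{-1}(0,x,x,q)$ from the transport equation for the next-order amplitude, the initial condition $\sum_j U^{(j)}(0)\equiv I$, the asserted cancellation of the $L_\mathrm{sub}$ term against the phase coefficient $f^{(j)}$, and the emergence of the residual $-i\{[v^{(j)}]^*,v^{(j)}\}$ --- is precisely the substance of the theorem, and it is stated rather than demonstrated. You correctly identify the algebraic identities that ought to drive the cancellations, namely $(L_\mathrm{prin}-h^{(j)})v^{(j)}=0$, $[v^{(j)}]^*v^{(j)}=1$ and cyclicity of the trace, but writing down the transport equation in local coordinates, evaluating it at $x^{n+1}=0$, and performing the trace calculation is exactly the work that remains; as written this is a credible programme for a proof, not a proof.
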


It is interesting that the RHS of formula
(\ref{subprincipal symbol of OI at time zero})
admits a geometric interpretation: it can be interpreted
as the scalar curvature of a $\mathrm{U}(1)$ connection
on $T^*M\setminus\{0\}$, see Section 5 of \cite{jst_part_a}
for details. This connection is to do with gauge transformations
of the normalised eigenvector
$v^{(j)}(x,p)$ of the principal
symbol $L_\mathrm{prin}(x,p)$ corresponding to the eigenvalue
$h^{(j)}(x,p)$. Namely, observe that if $v^{(j)}(x,p)$ is an eigenvector
and $\phi^{(j)}(x,p)$ is an arbitrary real-valued function,
then $e^{i\phi^{(j)}(x,p)}v^{(j)}(x,p)$ is also an eigenvector,
and careful analysis of this gauge transformation leads to the appearance of
a curvature term.

\

\textbf{Result 2.}
The formula for the first coefficient of the asymptotic expansion
(\ref{two-term asymptotic formula for counting function})
reads
\cite{ivrii_springer_lecture_notes,ivrii_book}
\begin{equation}
\label{formula for a}
a=(2\pi)^{-n}\sum_{j=1}^{m/2}
\ \int\limits_{h^{(j)}(x,p)<1}dx\,dp\,,
\end{equation}
where $dx=dx^1\ldots dx^n$ and $dp=dp_1\ldots dp_n\,$.

The formula for the second coefficient of the asymptotic expansion
(\ref{two-term asymptotic formula for counting function})
was established recently in \cite{jst_part_a}.

\begin{theorem}
\label{theorem2p2}
\begin{multline}
\label{formula for b}
b=-n(2\pi)^{-n}\sum_{j=1}^{m/2}
\ \int\limits_{h^{(j)}(x,p)<1}
\Bigl(
[v^{(j)}]^*L_\mathrm{sub}v^{(j)}
\\
-\frac i2
\{
[v^{(j)}]^*,L_\mathrm{prin}-h^{(j)},v^{(j)}
\}
+\frac i{n-1}h^{(j)}\{[v^{(j)}]^*,v^{(j)}\}
\Bigr)(x,p)\,
dx\,dp\,.
\end{multline}
\end{theorem}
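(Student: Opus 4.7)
The plan is to follow the Fourier Tauberian route, feeding the propagator representation of Result~1 and the subprincipal symbol formula of Theorem~\ref{theorem2p1} into a one-dimensional Tauberian theorem. First I would regularise $N(\lambda)$ (by convolution with a Schwartz function or through the eta function, as the excerpt discusses) so that the coefficients in \eqref{two-term asymptotic formula for counting function} are controlled by the singularity of $\operatorname{tr}U(x^{n+1})$ near $x^{n+1}=0$. Modulo a smooth kernel this trace equals $\sum_j\operatorname{tr}U^{(j)}(x^{n+1})$, and only the positive-index summands contribute to the positive-spectrum expansion, reducing the task to analysing $\operatorname{tr}U^{(j)}(x^{n+1})$ for $j=1,\ldots,m/2$.

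Next, I would compute each $\operatorname{tr}U^{(j)}(x^{n+1})$ by stationary phase on the diagonal. After reparametrising the phase so that at time zero it reduces to $(x-y)\cdot p$, the trace takes the form
\begin{equation*}
\operatorname{tr}U^{(j)}(x^{n+1})\sim\int\!\!\int e^{-ix^{n+1}h^{(j)}(x,p)}\,\sigma^{(j)}(x,p;x^{n+1})\,dx\,dp,
\end{equation*}
with $\sigma^{(j)}(x,p;0)\equiv1$ thanks to the normalisation $[v^{(j)}]^*v^{(j)}=1$ and formula \eqref{formula for principal symbol of oscillatory integral}. The one-term Tauberian theorem recovers \eqref{formula for a}; the two-term version expresses the $j$-th contribution to $b$ as an integral over $\{h^{(j)}<1\}$ of a quantity built from $\partial_{x^{n+1}}\sigma^{(j)}|_{x^{n+1}=0}$, with the overall factor $-n(2\pi)^{-n}$ arising from differentiation of $\lambda^n$ and the Jacobian of $h^{(j)}$-sublevel sets.

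The time derivative $\partial_{x^{n+1}}\sigma^{(j)}|_{x^{n+1}=0}$ splits into two pieces. Differentiating the exponential factor in \eqref{formula for principal symbol of oscillatory integral} contributes $-if^{(j)}(x,p)$; substituting the definition of $f^{(j)}$ gives the first two terms of the integrand in \eqref{formula for b} together with an unwanted piece $-i[v^{(j)}]^*\{v^{(j)},h^{(j)}\}$. The subprincipal symbol of the time-zero pseudodifferential operator $U^{(j)}(0)$ is, by Theorem~\ref{theorem2p1}, invariantly equal to $-i\{[v^{(j)}]^*,v^{(j)}\}$. The final step is an integration-by-parts argument on the sublevel set $\{h^{(j)}<1\}$: using Euler's identity $p_\alpha\,\partial h^{(j)}/\partial p_\alpha=h^{(j)}$ together with the degree-zero homogeneity of $v^{(j)}$, the two surplus terms combine to $\frac{i}{n-1}h^{(j)}\{[v^{(j)}]^*,v^{(j)}\}$, the factor $\frac{1}{n-1}$ reflecting the weight of the phase-space volume form on the level set of a degree-one homogeneous function in dimension $2n$.

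The main obstacle I expect is this last algebraic reduction. The individual surplus terms are not gauge-invariant under $v^{(j)}\mapsto e^{i\phi^{(j)}}v^{(j)}$, and it is only their combination, together with the invariant subprincipal datum from Theorem~\ref{theorem2p1}, that yields the gauge-invariant curvature term in \eqref{formula for b}. The excerpt explicitly notes that lower-order symbols of time-dependent Fourier integral operators lack an invariant definition, so tracking signs, powers of $i$, and the boundary terms that must vanish on $\{h^{(j)}=1\}$ through the integration-by-parts is the delicate part of the argument. Theorem~\ref{theorem2p1} is precisely the tool that makes the computation possible by providing an invariant expression at the one time where everything reduces to pseudodifferential calculus.
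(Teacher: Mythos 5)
Your overall framework — Fourier Tauberian argument applied to $\operatorname{tr}U(x^{n+1})$, with the principal-symbol formula \eqref{formula for principal symbol of oscillatory integral} driving the leading behaviour and Theorem~\ref{theorem2p1} supplying the missing invariant datum at time zero — matches the route the paper cites (the actual proof lives in \cite{jst_part_a}; here only the statement and the remark that Theorem~\ref{theorem2p1} is essential are given). So at that level your plan is sound.

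However, the algebraic bookkeeping in your penultimate paragraph has a concrete gap. When you differentiate
$\operatorname{tr}\bigl[v^{(j)}(x^{(j)},p^{(j)})\,[v^{(j)}(y,q)]^*\bigr]\exp\bigl(-i\int_0^{x^{n+1}}f^{(j)}\bigr)$
at $x^{n+1}=0$, you must differentiate \emph{both} factors, not only the exponential. The transport factor $v^{(j)}(x^{(j)},p^{(j)})$ moves along the Hamiltonian flow, so its derivative at time zero is $\{v^{(j)},h^{(j)}\}$, and after taking the trace this contributes $[v^{(j)}]^*\{v^{(j)},h^{(j)}\}$. This \emph{exactly} cancels the $-i[v^{(j)}]^*\{v^{(j)},h^{(j)}\}$ term coming from $-if^{(j)}$: writing it out, $\tfrac{d}{dx^{n+1}}\bigl|_0 = [v^{(j)}]^*\{v^{(j)},h^{(j)}\}-if^{(j)} = -i[v^{(j)}]^*L_\mathrm{sub}v^{(j)}-\tfrac12\{[v^{(j)}]^*,L_\mathrm{prin}-h^{(j)},v^{(j)}\}$, with no residual $\{v,h\}$ piece. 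So the term you call ``unwanted'' is not a surplus waiting to be removed by integration by parts — it is already absent once the transport factor is properly differentiated. Consequently the third term $\tfrac{i}{n-1}h^{(j)}\{[v^{(j)}]^*,v^{(j)}\}$ of the integrand in \eqref{formula for b} must come \emph{entirely} from $\operatorname{tr}[U^{(j)}(0)]_\mathrm{sub}=-i\{[v^{(j)}]^*,v^{(j)}\}$, and the factor $\tfrac1{n-1}$ is a homogeneity factor: the subprincipal datum is positively homogeneous of degree $-1$ in momentum, and re-expressing its sublevel-set integral in terms of the degree-zero quantity $h^{(j)}\{[v^{(j)}]^*,v^{(j)}\}$ introduces precisely the ratio $\tfrac{n}{n-1}$ between the radial integrals $\int_0^1 r^{n-2}\,dr$ and $\int_0^1 r^{n-1}\,dr$. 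This is a scaling argument, not a combination of two surplus terms via Euler's identity. Your instinct about where the delicacy lies (gauge non-invariance of the individual pieces, the lack of an invariant notion of lower-order symbol for time-dependent FIOs) is correct, but the mechanism you propose for the final reduction is not.
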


Note that Theorem \ref{theorem2p1} plays an important role
in the proof of Theorem \ref{theorem2p2}.  The information contained
in formula (\ref{formula for principal symbol of oscillatory integral})
is, on its own,
insufficient for the derivation of the formula for the coefficient $b$.

Recall also that according to Remark \ref{remark about m being even}
the number $m$ is even, so the upper limit of
summation in formulae
\eqref{formula for a}
and
\eqref{formula for b}
is a natural number.

A bibliographic review of the subject is provided in Section 11
of  \cite{jst_part_a}.

\section{Two by two operators are special}
\label{Two by two operators are special}

Suppose that we are dealing with a $2\times2$ operator, i.e.~suppose that
\begin{equation}
\label{m equals two}
m=2.
\end{equation}
Observe that in this case the determinant of the principal symbol is a quadratic form
in the dual variable (momentum) $p$\,:
\begin{equation}
\label{definition of metric}
\det L_\mathrm{prin}(x,p)=-g^{\alpha\beta}(x)\,p_\alpha p_\beta\,.
\end{equation}
We interpret the real coefficients $g^{\alpha\beta}(x)=g^{\beta\alpha}(x)$,
$\alpha,\beta=1,\ldots,n$, appearing in formula (\ref{definition of metric})
as components of a (contravariant) metric tensor.
Thus, $2\times2$ formally self-adjoint first order linear partial differential
operators are special in that the concept of a metric is encoded
within such operators. This opens the way to the geometric
interpretation of analytic results.
So further on in this paper we work under the assumption (\ref{m equals two}).

\section{Dimension four is special}
\label{Dimension four is special}

It is easy to see that if $n\ge5$, then our metric defined in
accordance with formula (\ref{definition of metric}) has the
property
$\det g^{\alpha\beta}(x)=0$, $\forall x\in M$.
Hence,
\begin{equation}
\label{n equals four}
n=4
\end{equation}
is the highest dimension in which it makes sense to define the
metric as we do.
So further on in this section and the next two we work under the assumption (\ref{n equals four}).

It is natural to ask the question: what is the signature of our
metric? The answer is given by the following lemma.

\begin{lemma}
\label{Lemma about Lorentzian metric}
Suppose that we have
(\ref{m equals two}) and (\ref{n equals four})
and suppose that our operator $L$ satisfies the non-degeneracy
condition (\ref{definition of non-degeneracy}).
Then our metric tensor defined in
accordance with formula (\ref{definition of metric}) is Lorentzian, i.e.~it has
three positive eigenvalues
and one negative eigenvalue.
\end{lemma}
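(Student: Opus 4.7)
The plan is to use the Pauli matrix decomposition of Hermitian $2\times 2$ matrices to reduce the signature question to a straightforward linear algebra statement. Since $L$ is formally self-adjoint, $L_\mathrm{prin}(x,p)$ is a $2\times 2$ Hermitian matrix linear in $p$, so at each fixed $x$ we may write
\begin{equation*}
L_\mathrm{prin}(x,p)=a_0(x,p)\,I+a_1(x,p)\,\sigma_1+a_2(x,p)\,\sigma_2+a_3(x,p)\,\sigma_3,
\end{equation*}
where the four coefficients $a_k(x,p)=c_k^\alpha(x)\,p_\alpha$ are real-valued and linear in $p$, and $\sigma_1,\sigma_2,\sigma_3$ are the standard Pauli matrices. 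A direct computation of the determinant yields
\begin{equation*}
\det L_\mathrm{prin}(x,p)=a_0^2-a_1^2-a_2^2-a_3^2,
\end{equation*}
so comparing with \eqref{definition of metric} gives the identity
\begin{equation*}
g^{\alpha\beta}(x)\,p_\alpha p_\beta=-a_0(x,p)^2+a_1(x,p)^2+a_2(x,p)^2+a_3(x,p)^2.
\end{equation*}

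Next I would package the coefficients into a $4\times 4$ real matrix $C(x)$ with entries $C_{k\alpha}(x)=c_k^\alpha(x)$, so that the above reads $g=C^T D\,C$ in matrix form, where $D=\operatorname{diag}(-1,1,1,1)$. By Sylvester's law of inertia, if $C(x)$ is invertible, then $g^{\alpha\beta}(x)$ is congruent to $D$ and hence has signature $(-,+,+,+)$, which is precisely the Lorentzian conclusion of the lemma.

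The main (and only real) step is therefore to show that the non-degeneracy assumption \eqref{definition of non-degeneracy} forces $C(x)$ to be invertible at each point $x\in M$. This is direct: if there existed a nonzero $p\in T_x^*M$ with $C(x)p=0$, then $a_0(x,p)=a_1(x,p)=a_2(x,p)=a_3(x,p)=0$, so $L_\mathrm{prin}(x,p)=0$, contradicting \eqref{definition of non-degeneracy}. Hence $\ker C(x)=\{0\}$ and, since $C(x)$ is a square matrix, it is invertible. I expect no genuine obstacle here; the conceptual content is that dimension four is exactly the dimension in which the four real linear forms $a_0,a_1,a_2,a_3$ on $T_x^*M$ can simultaneously be linearly independent, which is why non-degeneracy automatically upgrades to a nondegenerate (indeed Lorentzian) quadratic form in this setting.
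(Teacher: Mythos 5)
Your argument is correct, and it is essentially the ``straightforward calculation'' the paper alludes to by citing Section~2 of \cite{nongeometric}: expand $L_\mathrm{prin}(x,p)$ in the basis $\{I,\sigma_1,\sigma_2,\sigma_3\}$ of real Hermitian $2\times2$ matrices, use $\det(a_0I+a_k\sigma_k)=a_0^2-a_1^2-a_2^2-a_3^2$ to identify $g^{\alpha\beta}=C^T\operatorname{diag}(-1,1,1,1)\,C$, and observe that non-degeneracy is precisely the invertibility of the $4\times4$ change-of-basis matrix $C(x)$, so Sylvester's law of inertia gives signature $(-,+,+,+)$. Both the decomposition and the way non-degeneracy enters match the intended proof, so there is nothing to add.
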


Lemma~\ref{Lemma about Lorentzian metric} is proved
by a straightforward calculation, see Section 2 in \cite{nongeometric}.

Lemma~\ref{Lemma about Lorentzian metric}
tells us that under the conditions
(\ref{m equals two}),
 (\ref{n equals four})
and (\ref{definition of non-degeneracy})
our operator $L$ is hyperbolic.
This indicates that one could, in principle,
perform a comprehensive microlocal analysis
of the corresponding propagator and, moreover, do this
in a relativistically invariant fashion.
Here the relativistic propagator is, loosely speaking,
the Fourier integral operator mapping
a 2-column of half-densities $v$
to a 2-column of half-densities $w$
which is a solution of the hyperbolic system $Lw=v$.
One would expect the construction of this relativistic propagator
to proceed along the lines of the construction sketched out
in Section \ref{Distribution of eigenvalues and the propagator}
and, in more detail, in \cite{jst_part_a},
only without reference to a particular choice of time coordinate.

We are currently a long way from developing
relativistic microlocal techniques. However, we are able to
perform a gauge-theoretic analysis of
$2\times2$ operators in dimension four.
The results of this gauge-theoretic analysis
are presented in the next two sections
and these results reveal additional geometric structures
encoded within the operator $L$. We hope that the
identification of these geometric structures
will eventually help us develop relativistic
microlocal techniques.

\section{Gauge-theoretic analysis in 4D}
\label{Gauge-theoretic analysis in 4D}

Take an arbitrary matrix-function
\begin{equation}
\label{GL2C matrix-function Q}
Q:M\to\mathrm{GL}(2,\mathbb{C})
\end{equation}
and consider the transformation of our $2\times2$ differential operator
\begin{equation}
\label{GL2C transformation of the operator}
L\mapsto Q^*LQ.
\end{equation}

We interpret
(\ref{GL2C transformation of the operator}) as a gauge transformation.
Note that in spectral theory it is customary to apply unitary transformations
rather than general linear transformations.
However, in view of
Lemma~\ref{Lemma about Lorentzian metric}
we are working in a relativistic (hyperbolic) setting without
a specified time coordinate and in this setting
restricting our analysis to unitary transformations would be unnatural.

The transformation (\ref{GL2C transformation of the operator})
of the differential operator $L$ induces the following transformations
of its principal and subprincipal symbols:
\begin{equation}
\label{GL2C transformation of the principal symbol}
L_\mathrm{prin}\mapsto Q^*L_\mathrm{prin}Q,
\end{equation}
\begin{equation}
\label{GL2C transformation of the subprincipal symbol}
L_\mathrm{sub}\mapsto
Q^*L_\mathrm{sub}Q
+\frac i2
\left(
Q^*_{x^\alpha}(L_\mathrm{prin})_{p_\alpha}Q
-
Q^*(L_\mathrm{prin})_{p_\alpha}Q_{x^\alpha}
\right).
\end{equation}

Comparing formulae
(\ref{GL2C transformation of the principal symbol})
and
(\ref{GL2C transformation of the subprincipal symbol})
we see that, unlike the principal symbol, the subprincipal
symbol does not transform in a covariant fashion due to
the appearance of terms with the gradient of the
matrix-function $Q(x)$. In order to identify the sources of this
non-covariance we observe that
any matrix-function (\ref{GL2C matrix-function Q})
can be written as a product of three terms:
a complex matrix-function of determinant one,
a positive scalar function and
a complex scalar function of modulus one.
Hence, we examine the three gauge-theoretic actions separately.

Take an arbitrary scalar function
$\psi:M\to\mathbb{R}$
and consider the transformation of our differential operator
\begin{equation}
\label{psi transformation of the operator}
L\mapsto e^\psi Le^\psi.
\end{equation}
The transformation
(\ref{psi transformation of the operator}) is a special case of the transformation
(\ref{GL2C transformation of the operator}) with $Q=e^\psi I$,
where $I$ is the $2\times2$ identity matrix.
Substituting this $Q$ into formula
(\ref{GL2C transformation of the subprincipal symbol}),
we get
\begin{equation}
\label{psi transformation of the subprincipal symbol}
L_\mathrm{sub}\mapsto
e^{2\psi}L_\mathrm{sub},
\end{equation}
so the subprincipal symbol transforms in a covariant fashion.

Now take an arbitrary scalar function
$\phi:M\to\mathbb{R}$
and consider the transformation of our differential operator
\begin{equation}
\label{phi transformation of the operator}
L\mapsto e^{-i\phi}Le^{i\phi}.
\end{equation}
The transformation
(\ref{phi transformation of the operator}) is a special case of the transformation
(\ref{GL2C transformation of the operator}) with $Q=e^{i\phi}I$.
Substituting this $Q$ into formula
(\ref{GL2C transformation of the subprincipal symbol}),
we get
\begin{equation}
\label{phi transformation of the subprincipal symbol}
L_\mathrm{sub}(x)\mapsto
L_\mathrm{sub}(x)+L_\mathrm{prin}(x,(\operatorname{grad}\phi)(x)),
\end{equation}
so the subprincipal symbol does not transform in a covariant fashion.
We do not (and can not) take any action with regards to the non-covariance
of (\ref{phi transformation of the subprincipal symbol}).

Finally, take an arbitrary matrix-function
$R:M\to\mathrm{SL}(2,\mathbb{C})$
and consider the transformation of our differential operator
\begin{equation}
\label{SL2C transformation of the operator}
L\mapsto R^*LR.
\end{equation}
Of course, the transformation
(\ref{SL2C transformation of the operator}) is a special case of the transformation
(\ref{GL2C transformation of the operator}): we are looking at the
case when $\det Q(x)=1$.
It turns out that it is possible to overcome the resulting non-covariance
in (\ref{GL2C transformation of the subprincipal symbol}) by introducing
the \emph{covariant subprincipal symbol} $\,L_\mathrm{csub}(x)\,$
in accordance with formula
\begin{equation}
\label{definition of covariant subprincipal symbol}
L_\mathrm{csub}:=
L_\mathrm{sub}
+\frac i{16}\,
g_{\alpha\beta}
\{
L_\mathrm{prin}
,
\operatorname{adj}L_\mathrm{prin}
,
L_\mathrm{prin}
\}_{p_\alpha p_\beta},
\end{equation}
where subscripts $p_\alpha$, $p_\beta$ indicate partial derivatives,
curly brackets denote the generalised Poisson bracket on matrix-functions
(\ref{definition of generalised Poisson bracket})
and $\,\operatorname{adj}\,$ stands for the operator of matrix adjugation
\begin{equation}
\label{definition of adjugation}
P=\begin{pmatrix}a&b\\ c&d\end{pmatrix}
\mapsto
\begin{pmatrix}d&-b\\-c&a\end{pmatrix}
=:\operatorname{adj}P
\end{equation}
from elementary linear algebra.

\begin{lemma}
\label{Lemma about covariant subprincipal symbol}
The transformation
(\ref{SL2C transformation of the operator})
of the differential operator induces the transformation
\begin{equation}
\label{SL2C transformation of the covariant subprincipal symbol}
L_\mathrm{csub}\mapsto
R^*L_\mathrm{csub}R
\end{equation}
of its covariant subprincipal symbol.
\end{lemma}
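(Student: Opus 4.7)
The approach is a direct computation in local coordinates, exploiting the rigid algebra of $2\times 2$ matrices together with the constraint $\det R=1$. The target is to show that the transformation of the correction term $\frac{i}{16}g_{\alpha\beta}\{L_\mathrm{prin},\operatorname{adj}L_\mathrm{prin},L_\mathrm{prin}\}_{p_\alpha p_\beta}$ exactly cancels the non-covariant contribution
\[
\Delta L_\mathrm{sub}:=\frac{i}{2}\bigl(R^*_{x^\alpha}(L_\mathrm{prin})_{p_\alpha}R-R^*(L_\mathrm{prin})_{p_\alpha}R_{x^\alpha}\bigr)
\]
picked up by $L_\mathrm{sub}$ in \eqref{GL2C transformation of the subprincipal symbol}.

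First I would collect the algebraic transformation laws implied by $\det R=1$. Invariance of $\det L_\mathrm{prin}$ gives invariance of $g^{\alpha\beta}$, hence of $g_{\alpha\beta}$. Using $\operatorname{adj}(PS)=\operatorname{adj}(S)\operatorname{adj}(P)$ and $\operatorname{adj}R=R^{-1}$, the adjugate transforms by $\operatorname{adj}L_\mathrm{prin}\mapsto R^{-1}(\operatorname{adj}L_\mathrm{prin})(R^*)^{-1}$. Differentiating $\det R=1$ shows that $M_\gamma:=R_{x^\gamma}R^{-1}$ is traceless, and so is $M_\gamma^*=(R^*)^{-1}R^*_{x^\gamma}$.

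Next I would substitute these transformed quantities into $\{R^*L_\mathrm{prin}R,\,R^{-1}(\operatorname{adj}L_\mathrm{prin})(R^*)^{-1},\,R^*L_\mathrm{prin}R\}$, cancel adjacent $RR^{-1}$ and $(R^*)^{-1}R^*$ factors, and use the $2\times 2$ identity $L_\mathrm{prin}\cdot\operatorname{adj}L_\mathrm{prin}=\operatorname{adj}L_\mathrm{prin}\cdot L_\mathrm{prin}=(\det L_\mathrm{prin})I$ to collapse several terms into scalars times identity. The result decomposes into three pieces: (a)~a manifestly covariant part $R^*\{L_\mathrm{prin},\operatorname{adj}L_\mathrm{prin},L_\mathrm{prin}\}R$; (b)~the scalar factor $\det L_\mathrm{prin}$ times $R^*_{x^\gamma}(L_\mathrm{prin})_{p_\gamma}R-R^*(L_\mathrm{prin})_{p_\gamma}R_{x^\gamma}$; and (c)~``mixed'' terms with $M_\gamma$ or $M_\gamma^*$ sandwiched between factors of $L_\mathrm{prin}$, $\operatorname{adj}L_\mathrm{prin}$ and $(L_\mathrm{prin})_{p_\gamma}$. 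Applying $\frac{i}{16}g_{\alpha\beta}\partial_{p_\alpha}\partial_{p_\beta}$ to piece (b) and using $g_{\alpha\beta}\partial_{p_\alpha}\partial_{p_\beta}\det L_\mathrm{prin}=-2n=-8$ (valid because $n=4$) yields precisely $-\Delta L_\mathrm{sub}$, cancelling the non-covariance from \eqref{GL2C transformation of the subprincipal symbol}; piece (a), being already of the form $R^*(\cdot)R$, provides the covariant transformation of the correction.

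The main obstacle is verifying that the mixed terms in (c) are annihilated by $g_{\alpha\beta}\partial_{p_\alpha}\partial_{p_\beta}$. Writing $L_\mathrm{prin}=X^\mu p_\mu$ and $\operatorname{adj}L_\mathrm{prin}=Y^\mu p_\mu$, the identity $L_\mathrm{prin}\operatorname{adj}L_\mathrm{prin}=(\det L_\mathrm{prin})I$ amounts to the Clifford-type relation $X^\alpha Y^\beta+X^\beta Y^\alpha=-2g^{\alpha\beta}I$ (and symmetrically for $Y,X$). Contracting with $g_{\alpha\beta}$ gives $g_{\alpha\beta}X^\alpha Y^\beta=-nI$; I would upgrade this to the spinor-trace identity
\[
g_{\alpha\beta}X^\alpha MY^\beta=-2\operatorname{tr}(M)\,I,\qquad g_{\alpha\beta}Y^\beta MX^\alpha=-2\operatorname{tr}(M)\,I,
\]
valid for any $2\times 2$ matrix $M$, by reducing to a local Pauli frame (available in Lorentzian signature by Lemma~\ref{Lemma about Lorentzian metric}) and verifying on the basis $\{I,\sigma_1,\sigma_2,\sigma_3\}$. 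Since $M_\gamma$ and $M_\gamma^*$ are traceless, each mixed term vanishes after the contraction, and combining with (a) and the cancellation from (b) establishes $L_\mathrm{csub}\mapsto R^*L_\mathrm{csub}R$.
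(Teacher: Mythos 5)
The paper itself does not prove this lemma; it defers the proof to reference \cite{nongeometric}, so a side-by-side comparison is not possible. That said, your computation is a natural, self-contained argument and is, as far as I can see, correct. All the key algebraic ingredients are in place: the invariance of $g_{\alpha\beta}$ under unimodular conjugation, $\operatorname{adj}R=R^{-1}$ when $\det R=1$, the tracelessness of $M_\gamma=R_{x^\gamma}R^{-1}$ (and of $M_\gamma^*$), the decomposition of the transformed bracket into a covariant piece, a $(\det L_\mathrm{prin})$-weighted piece, and the mixed terms, and the numerical coefficient $g_{\alpha\beta}\partial_{p_\alpha}\partial_{p_\beta}\det L_\mathrm{prin}=-2n=-8$ turning $\frac{i}{16}$ into the required $-\frac{i}{2}$ that cancels the non-covariant part of \eqref{GL2C transformation of the subprincipal symbol}.

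The one place you should tighten is the justification of the spinor-trace identity $g_{\alpha\beta}X^\alpha M Y^\beta=-2\operatorname{tr}(M)\,I$. Reducing to a Pauli frame is fine, but it rests on the fact that \emph{any} set of Hermitian $X^\alpha=(L_\mathrm{prin})_{p_\alpha}$ spanning the Hermitian $2\times2$ matrices and producing a Lorentzian $g^{\alpha\beta}=-\tfrac12\operatorname{tr}(X^\alpha\operatorname{adj}X^\beta)$ can be brought, by a $\mathrm{GL}(2,\mathbb{C})$-conjugation combined with a linear change of frame on $T_x^*M$, to the standard Pauli set over Minkowski. You use this implicitly. It is true, but it should either be stated with a brief argument (for example by checking that the identity is invariant under $X^\alpha\mapsto SX^\alpha S^*$, $Y^\alpha\mapsto S^{-1}Y^\alpha(S^*)^{-1}$ together with frame rotations, and then verified once in the standard frame), or replaced by a direct algebraic derivation from $X^\alpha Y^\beta+X^\beta Y^\alpha=-2g^{\alpha\beta}I$ together with $Y^\alpha=(\operatorname{tr}X^\alpha)I-X^\alpha$. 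With that point filled in, the proof is complete.
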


The proof of
Lemma~\ref{Lemma about covariant subprincipal symbol}
is given in \cite{nongeometric}.

Formula (\ref{SL2C transformation of the covariant subprincipal symbol})
tells us that when working
with $2\times2$ operators in dimension four
it makes sense to use the covariant subprincipal
symbol rather than the standard  subprincipal
symbol because the covariant subprincipal
is `more invariant'.

It is easy to check that,
like the standard  subprincipal symbol,
the covariant subprincipal symbol
of a formally self-adjoint
non-degenerate first order differential operator
is an Hermitian matrix-function on the base manifold $M$.
Also, it is easy to see that
$L_\mathrm{prin}(x,p)$ and $L_\mathrm{csub}(x)$
uniquely determine our operator $L$.

Substituting (\ref{definition of the subprincipal symbol})
into (\ref{definition of covariant subprincipal symbol})
we get
\begin{equation}
\label{definition of covariant subprincipal symbol extended}
L_\mathrm{csub}=
L_0
+\frac i2(L_\mathrm{prin})_{x^\alpha p_\alpha}
+\frac i{16}\,
g_{\alpha\beta}
\{
L_\mathrm{prin}
,
\operatorname{adj}L_\mathrm{prin}
,
L_\mathrm{prin}
\}_{p_\alpha p_\beta}.
\end{equation}
Comparing formulae
(\ref{definition of the subprincipal symbol})
and
(\ref{definition of covariant subprincipal symbol extended})
we see that the standard subprincipal symbol
and covariant subprincipal symbol have the same structure, only
the covariant subprincipal symbol has a second correction term
designed to `take care' of special linear transformations.

Examination of formulae
(\ref{definition of covariant subprincipal symbol}),
(\ref{psi transformation of the subprincipal symbol})
and
(\ref{phi transformation of the subprincipal symbol})
shows that the transformations
(\ref{psi transformation of the operator})
and
(\ref{phi transformation of the operator})
of the differential operator induce the transformations
\begin{equation}
\label{psi transformation of the covariant subprincipal symbol}
L_\mathrm{csub}\mapsto
e^{2\psi}L_\mathrm{csub}\,,
\end{equation}
\begin{equation}
\label{phi transformation of the covariant subprincipal symbol}
L_\mathrm{csub}(x)\mapsto
L_\mathrm{csub}(x)
+L_\mathrm{prin}(x,(\operatorname{grad}\phi)(x))
\end{equation}
of its covariant subprincipal symbol.
Thus, the switch from standard subprincipal symbol to covariant
subprincipal symbol `does not spoil' the behaviour under the
transformations
(\ref{psi transformation of the operator})
and
(\ref{phi transformation of the operator}).

The non-degeneracy condition (\ref{definition of non-degeneracy})
implies that for each $x\in M$ the matrices
\begin{equation}
\label{new formula for Pauli matrices}
\sigma^\alpha(x):=(L_\mathrm{prin})_{p_\alpha}(x),\qquad\alpha=1,2,3,4,
\end{equation}
form a basis
in the real vector space of $2\times2$ Hermitian matrices.
Decomposing the covariant subprincipal symbol $L_\mathrm{csub}(x)$
with respect to this basis, we get
$\,L_\mathrm{csub}(x)=\sigma^\alpha(x)\,A_\alpha(x)\,$
with some real coefficients $A_\alpha(x)$, $\alpha=1,2,3,4$.
The latter formula
can be rewritten in more compact form as
\begin{equation}
\label{decomposition of Z compact}
L_\mathrm{csub}(x)=L_\mathrm{prin}(x,A(x)),
\end{equation}
where $A$ is a covector field with components $A_\alpha(x)$, $\alpha=1,2,3,4$.
Formula (\ref{decomposition of Z compact})
tells us that the covariant subprincipal symbol $L_\mathrm{csub}$
is equivalent to a real-valued covector field $A$,
the electromagnetic covector potential.

Examination of formulae
(\ref{SL2C transformation of the covariant subprincipal symbol})
and
(\ref{psi transformation of the covariant subprincipal symbol})--(\ref{decomposition of Z compact})
shows that our electromagnetic covector potential $A$ is invariant
under the transformations
(\ref{SL2C transformation of the operator})
and
(\ref{psi transformation of the operator})
of our differential operator $L$,
whereas
the transformation (\ref{phi transformation of the operator})
induces the transformation
$A\mapsto A+\operatorname{grad}\phi$.

The geometric and theoretical physics interpretations of the transformations
(\ref{psi transformation of the operator}),
(\ref{phi transformation of the operator})
and
(\ref{SL2C transformation of the operator})
are discussed in detail in \cite{nongeometric}.

\section{A non-geometric representation of the massive Dirac equation in 4D}
\label{A non-geometric representation of the massive Dirac equation in 4D}

As explained in the previous section,
in dimension four our formally self-adjoint non-degenerate $2\times2$
first order differential operator $L$
is completely determined by its principal symbol $L_\mathrm{prin}(x,p)$
and covariant subprincipal symbol $L_\mathrm{csub}(x)$.
Namely, in local coordinates the formula for the differential
operator $L$ reads
\begin{multline}
\label{symbol Op explicit}
L=
-i
[(L_\mathrm{prin})_{p_\alpha}(x)]\frac\partial{\partial x^\alpha}
\\
-\frac i2(L_\mathrm{prin})_{x^\alpha p_\alpha}(x)
-\frac i{16}
(
g_{\alpha\beta}
\{
L_\mathrm{prin}
,
\operatorname{adj}L_\mathrm{prin}
,
L_\mathrm{prin}
\}_{p_\alpha p_\beta}
)
(x)
+L_\mathrm{csub}(x)\,.
\end{multline}
Further on we use the notation
\begin{equation}
\label{symbol Op}
L=\operatorname{Op}(L_\mathrm{prin},L_\mathrm{csub})
\end{equation}
as shorthand for (\ref{symbol Op explicit}).
We call (\ref{symbol Op}) the \emph{covariant representation}
of the differential operator $L$.

Using the covariant representation (\ref{symbol Op})
and matrix adjugation (\ref{definition of adjugation})
we define the
adjugate of the differential operator $L$ as
\begin{equation*}
\label{definition of adjugate operator}
\operatorname{Adj}L:=\operatorname{Op}
(\operatorname{adj}L_\mathrm{prin},
\operatorname{adj}L_\mathrm{csub}).
\end{equation*}

We define the Dirac operator as the differential operator
\begin{equation}
\label{analytic definition of the Dirac operator}
D:=
\begin{pmatrix}
L&mI\\
mI&\operatorname{Adj}L
\end{pmatrix}
\end{equation}
acting on 4-columns $v$ of complex-valued scalar fields.
Here $m$ is the electron mass and
$I$ is the $2\times2$ identity matrix.

We claim
that the system of four scalar equations
$Dv=0$
is equivalent to the Dirac equation in its traditional geometric formulation.
In order to justify this claim we need to compare our Dirac operator
(\ref{analytic definition of the Dirac operator}) with the traditional Dirac
operator $D_\mathrm{trad}$, see Appendix A in \cite{nongeometric}
for definition.

\begin{theorem}
\label{theorem6p1}
The two operators
are related by the formula
\begin{equation*}
\label{main theorem formula}
D
=
|\det g_{\kappa\lambda}|^{1/4}
\,
D_\mathrm{trad}
\,
|\det g_{\mu\nu}|^{-1/4}\,,
\end{equation*}
where the Lorentzian metric is defined in accordance with formula (\ref{definition of metric}).
\end{theorem}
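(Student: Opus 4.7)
The plan is to verify the identity by computing both sides explicitly in local coordinates and comparing block-by-block. Since the off-diagonal $mI$ blocks of $D$ and $D_\mathrm{trad}$ both commute with multiplication by the scalar $|\det g|^{1/4}$, they cancel trivially under conjugation. The theorem therefore reduces to showing $L = |\det g|^{1/4}\,L_{\mathrm{trad}}^{\mathrm{up}}\,|\det g|^{-1/4}$ for the upper block, together with the corresponding identity for $\operatorname{Adj}L$ and the lower block; by symmetry the latter will follow from the former using the fact that $\operatorname{adj}$ on $2\times2$ matrices is an anti-involution.

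First I would unpack the covariant representation (\ref{symbol Op explicit}) of $L$ as a differential operator: its first-order part is $-i\sigma^\alpha\partial_{x^\alpha}$ with Pauli matrices $\sigma^\alpha$ defined by (\ref{new formula for Pauli matrices}), and its zeroth-order part consists of $L_\mathrm{csub}$, the $-\tfrac i2(L_\mathrm{prin})_{x^\alpha p_\alpha}$ term, and the $\tfrac{i}{16}g_{\alpha\beta}\{L_\mathrm{prin},\operatorname{adj}L_\mathrm{prin},L_\mathrm{prin}\}_{p_\alpha p_\beta}$ correction. In parallel I would expand $D_\mathrm{trad}$ (as defined in Appendix A of \cite{nongeometric}) in the same coordinates: using the vierbein encoded in $\sigma^\alpha$, this yields $-i\sigma^\alpha\partial_{x^\alpha}$ as the first-order part, plus the spin connection coefficient term $\sigma^\alpha\Gamma_\alpha^{\mathrm{spin}}$, plus the electromagnetic coupling $\sigma^\alpha A_\alpha$, where $A_\alpha$ is identified with $L_\mathrm{csub}$ via the decomposition (\ref{decomposition of Z compact}).

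Next I would compute the conjugation. The conjugation $|\det g|^{1/4}(\,\cdot\,)|\det g|^{-1/4}$ leaves all zeroth-order matrix-functions fixed and transforms the first-order part $-i\sigma^\alpha\partial_{x^\alpha}$ by adding $\tfrac{i}{4}\sigma^\alpha\partial_{x^\alpha}\log|\det g|$. Using $\operatorname{tr}\sigma^\alpha = -g^{\alpha\beta}(\partial_{p_\beta}\operatorname{tr} L_\mathrm{prin})$ one expresses this commutator term in the form $-\tfrac{i}{2}(L_\mathrm{prin})_{x^\alpha p_\alpha} + (\text{trace-free part})$, thereby accounting precisely for the $-\tfrac i2(L_\mathrm{prin})_{x^\alpha p_\alpha}$ correction appearing in the covariant representation. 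The electromagnetic part $L_\mathrm{csub} = \sigma^\alpha A_\alpha$ matches the corresponding term in $D_\mathrm{trad}$ by the very definition of $A_\alpha$.

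The main obstacle will be identifying the trace-free piece of the density commutator together with the term $-\tfrac{i}{16}g_{\alpha\beta}\{L_\mathrm{prin},\operatorname{adj}L_\mathrm{prin},L_\mathrm{prin}\}_{p_\alpha p_\beta}$ with the spin connection piece $\sigma^\alpha\Gamma_\alpha^{\mathrm{spin}}$ of $D_\mathrm{trad}$. This is a purely algebraic task, but intricate: one uses the $2\times2$ identity $\operatorname{adj}P = (\operatorname{tr}P)I - P$ and the Clifford-type relation $\sigma^\alpha\operatorname{adj}\sigma^\beta + \sigma^\beta\operatorname{adj}\sigma^\alpha = -2g^{\alpha\beta}I$ to expand the generalised Poisson bracket (\ref{definition of generalised Poisson bracket}) into products of Pauli matrices and their derivatives, then recognises the resulting combination as the standard formula expressing the Levi-Civita spin connection in terms of the Pauli-matrix frame $\sigma^\alpha$. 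Once this algebraic identification is made, the upper-block equality is established, the lower-block equality follows by applying $\operatorname{adj}$, and Theorem \ref{theorem6p1} follows.
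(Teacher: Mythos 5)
The paper does not actually prove Theorem~\ref{theorem6p1} here---it explicitly defers the proof to \cite{nongeometric}---so a line-by-line comparison is not possible. What the paper does tell us about the structure of that proof is that its technical core is showing that the generalised Poisson bracket term in (\ref{definition of covariant subprincipal symbol}) reproduces the spin connection coefficients in aggregate, the way they appear contracted in the Dirac operator. Your plan captures exactly this strategy: expand $L$ via the covariant representation (\ref{symbol Op explicit}) and $D_\mathrm{trad}$ in local coordinates, use the $|\det g|^{\pm 1/4}$ conjugation to absorb the half-density correction $-\tfrac i2(L_\mathrm{prin})_{x^\alpha p_\alpha}$, match $L_\mathrm{csub}$ with the electromagnetic coupling via (\ref{decomposition of Z compact}), and identify the Poisson bracket term with $\sigma^\alpha\Gamma^\mathrm{spin}_\alpha$. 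As a strategy this agrees with what the paper indicates.

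Three places where the plan is thin. First, the auxiliary identity you invoke, $\operatorname{tr}\sigma^\alpha = -g^{\alpha\beta}\,\partial_{p_\beta}\operatorname{tr}L_\mathrm{prin}$, unwinds (since $\partial_{p_\beta}\operatorname{tr}L_\mathrm{prin}=\operatorname{tr}\sigma^\beta$) to $\operatorname{tr}\sigma^\alpha = -g^{\alpha\beta}\operatorname{tr}\sigma^\beta$, which is not generally true and has mismatched index placement; the relation actually needed connects $\partial_{x^\alpha}\sigma^\alpha$ with $\partial_{x^\alpha}\log\sqrt{|\det g_{\mu\nu}|}$, and comes from differentiating $\det L_\mathrm{prin}=-g^{\alpha\beta}p_\alpha p_\beta$ in $x$ and contracting against the dual frame, not from the formula you wrote. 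Second, saying the lower block ``follows by applying $\operatorname{adj}$'' to the upper-block identity is misleading: $\operatorname{adj}$ is an anti-involution, so naively applying it reverses the conjugating factors. The correct reduction is to observe that $\operatorname{adj}L_\mathrm{prin}$ determines the same metric (same determinant) and that $\operatorname{Adj}L$ is again of the form (\ref{symbol Op explicit}) with $L_\mathrm{prin},L_\mathrm{csub}$ replaced by their adjugates; one then reruns the upper-block argument verbatim. Third, and most importantly, the step on which the entire theorem rests---showing that $\tfrac i{16}g_{\alpha\beta}\{L_\mathrm{prin},\operatorname{adj}L_\mathrm{prin},L_\mathrm{prin}\}_{p_\alpha p_\beta}$, together with the trace-free residue from the density conjugation, equals $\sigma^\alpha\Gamma^\mathrm{spin}_\alpha$---is named but not carried out. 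You correctly list the ingredients (the $2\times2$ adjugate identity and the Clifford-type relation among the $\sigma^\alpha$), but this computation is precisely the ``technical calculations'' the paper cites \cite{nongeometric} for, and without executing it the argument is a plan, not a proof.
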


The proof of
Theorem~\ref{theorem6p1}
is given in \cite{nongeometric}.

Our representation (\ref{analytic definition of the Dirac operator})
of the massive hyperbolic Dirac operator in dimension four
is given in an analytic language different from the traditional geometric language.
Hence, we
feel the need to reassure the reader that all the standard ingredients
are implicitly contained in (\ref{analytic definition of the Dirac operator}).

The matrices
(\ref{new formula for Pauli matrices})
are our Pauli matrices. Moreover, it is easy to see that
our definition of the metric
(\ref{definition of metric})
ensures that our Pauli matrices
(\ref{new formula for Pauli matrices})
automatically satisfy the standard
defining relation
$
\sigma^\alpha(\operatorname{adj}\sigma^\beta)
+
\sigma^\beta(\operatorname{adj}\sigma^\alpha)
=-2Ig^{\alpha\beta}
$.

The traditional representation of the Dirac operator
involves covariant derivatives of spinor fields with respect
to the Levi-Civita connection.
Technical calculations given in  \cite{nongeometric}
show that these connection coefficients are contained within
the Poisson bracket term in
our definition of the covariant subprincipal symbol
(\ref{definition of covariant subprincipal symbol}).
More precisely,
the Poisson bracket term in
formula (\ref{definition of covariant subprincipal symbol})
does not give each spinor connection coefficient separately,
it rather gives their sum, the way they appear in the Dirac operator.

\section{Dimension three is special}
\label{Dimension three is special}

In the remainder of the paper we retain the assumption (\ref{m equals two}),
and we also assume that
\begin{equation}
\label{n equals three}
n=3
\end{equation}
and that the principal symbol of
our formally self-adjoint
first order differential operator $L$
is trace-free,
\begin{equation}
\label{principal symbol is trace-free}
\operatorname{tr}L_\mathrm{prin}(x,p)=0.
\end{equation}

In addition, in the remainder of the paper
we assume ellipticity (\ref{definition of ellipticity}).
Note that under the assumptions
(\ref{m equals two}),
(\ref{n equals three})
and
(\ref{principal symbol is trace-free})
the ellipticity condition
(\ref{definition of ellipticity})
is equivalent to the non-degeneracy condition
(\ref{definition of non-degeneracy}).

We define the metric tensor in accordance with formula (\ref{definition of metric}).
It is easy to see that under the assumptions
(\ref{m equals two}),
(\ref{n equals three}),
(\ref{principal symbol is trace-free})
and
(\ref{definition of ellipticity})
our metric is Riemannian, i.e.~the metric tensor is positive definite.

\section{Gauge-theoretic analysis in 3D}
\label{Gauge-theoretic analysis in 3D}

The metric
tensor defined in accordance with formula (\ref{definition of metric})
does not determine the Hermitian matrix-function $L_\mathrm{prin}(x,p)$
uniquely.
Hence, in this section we identify a further geometric object
encoded within
the principal symbol of our differential operator $L$.
To this end,
we will now start varying this principal symbol,
assuming the metric $g$, defined by formula
(\ref{definition of metric}), to be fixed (prescribed).

Let us fix a reference principal symbol $\mathring L_\mathrm{prin}(x,p)$ corresponding to the
prescribed metric $g$ and look at all principal symbols $L_\mathrm{prin}(x,p)$ which
correspond to the same prescribed metric $g$.

\begin{lemma}
\label{Lemma about two principal symbols}
If our principal symbol $L_\mathrm{prin}(x,p)$
is sufficiently close to the reference principal symbol
$\mathring L_\mathrm{prin}(x,p)$, then there exists
a unique special unitary matrix-function
$R:M\to\mathrm{SU}(2)$
close to the identity matrix such that
\begin{equation*}
\label{principal symbol via reference principal symbol}
L_\mathrm{prin}(x,p)=R^*(x)\,\mathring L_\mathrm{prin}(x,p)\,R(x)\,.
\end{equation*}
\end{lemma}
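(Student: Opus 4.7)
\textbf{Proof proposal for Lemma~\ref{Lemma about two principal symbols}.}
My plan is to reduce the statement to a pointwise linear-algebra fact about frames in $\mathbb{R}^3$, then invoke the double cover $\mathrm{SU}(2)\to\mathrm{SO}(3)$ to lift smoothly. Fix $x\in M$ and introduce the matrices $s^\alpha(x):=(L_\mathrm{prin})_{p_\alpha}(x)$ and $\mathring s^\alpha(x):=(\mathring L_\mathrm{prin})_{p_\alpha}(x)$, so that $L_\mathrm{prin}(x,p)=s^\alpha(x)p_\alpha$ and similarly with a ring. Assumptions \eqref{m equals two}, \eqref{n equals three} and \eqref{principal symbol is trace-free} ensure each $s^\alpha(x)$ is a trace-free Hermitian $2\times 2$ matrix, so it can be written as $s^\alpha(x)=v^\alpha_j(x)\,\tau^j$ for some $v^\alpha_j(x)\in\mathbb{R}$, where $\tau^1,\tau^2,\tau^3$ are the standard Pauli matrices. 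The identity $\det(v_j\tau^j)=-|v|^2$ together with \eqref{definition of metric} shows that the triple $\{v^\alpha(x)\}_{\alpha=1,2,3}\subset\mathbb{R}^3$ has Gram matrix $g^{\alpha\beta}(x)$, and analogously for $\mathring v^\alpha(x)$.

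Hence, pointwise, $\{v^\alpha(x)\}$ and $\{\mathring v^\alpha(x)\}$ are two frames in Euclidean $\mathbb{R}^3$ with the same Gram matrix $g^{\alpha\beta}(x)$, which is positive definite by the last paragraph of the previous section. By elementary linear algebra, there is a unique $\mathcal{O}(x)\in\mathrm{O}(3)$ with $v^\alpha(x)=\mathcal{O}(x)\mathring v^\alpha(x)$. If $L_\mathrm{prin}$ is sufficiently $C^0$-close to $\mathring L_\mathrm{prin}$, then $\mathcal{O}(x)$ is close to the identity, hence lies in $\mathrm{SO}(3)$, and depends smoothly on $x$ (e.g.\ by Cramer's rule applied to the basis $\mathring v^\alpha(x)$, which is a basis of $\mathbb{R}^3$ precisely because $g^{\alpha\beta}(x)$ is positive definite).

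Next I would lift $\mathcal{O}(x)$ through the standard double cover $\pi:\mathrm{SU}(2)\to\mathrm{SO}(3)$ given by conjugation on trace-free Hermitian $2\times 2$ matrices: $R\in\mathrm{SU}(2)$ acts by $\tau^j\mapsto R^*\tau^j R=:\pi(R)^j{}_k\,\tau^k$. The map $\pi$ is a smooth $\mathbb{Z}_2$-cover, so there is a smooth local inverse $\pi^{-1}$ defined in a neighbourhood of $I\in\mathrm{SO}(3)$ with values near $I\in\mathrm{SU}(2)$. Setting $R(x):=\pi^{-1}(\mathcal{O}(x))$, which is well defined for $L_\mathrm{prin}$ close enough to $\mathring L_\mathrm{prin}$, gives a smooth special unitary matrix-function with $R^*\tau^j R=\pi(R)^j{}_k\tau^k$; applied to the decomposition of $\mathring L_\mathrm{prin}$ this yields $R^*\mathring L_\mathrm{prin}R=L_\mathrm{prin}$ as required.

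Uniqueness of $R(x)$ among $\mathrm{SU}(2)$-valued matrix-functions close to $I$ follows immediately from the uniqueness of the $\mathrm{O}(3)$-rotation $\mathcal{O}(x)$ together with the fact that the only elements of $\mathrm{SU}(2)$ that act trivially on the Pauli matrices by conjugation are $\pm I$; the closeness assumption forbids the sign $-I$. The main subtlety I expect, and the one where the phrase ``sufficiently close'' does real work, is ruling out the orientation-reversing case ($\det\mathcal{O}(x)=-1$) and ruling out the $-R$ branch of the double cover; both are handled precisely because closeness to the reference symbol forces $\mathcal{O}(x)$ close to $I\in\mathrm{SO}(3)$, which has a unique smooth $\mathrm{SU}(2)$-lift close to $I$.
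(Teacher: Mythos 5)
Your proposal is correct and takes essentially the same route as the paper's proof (Section~2 of \cite{action}): decompose the principal symbols in a Pauli basis to obtain frames in $\mathbb{R}^3$ sharing the Gram matrix $g^{\alpha\beta}$, find the unique orthogonal map relating them, use the closeness hypothesis to force it into $\mathrm{SO}(3)$ near the identity, and lift through the double cover $\mathrm{SU}(2)\to\mathrm{SO}(3)$.
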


The proof of Lemma~\ref{Lemma about two principal symbols}
is given in Section 2 of \cite{action}.

The choice of reference principal symbol $\mathring L_\mathrm{prin}(x,p)$
in our construction is
arbitrary, as long as this principal symbol corresponds to the prescribed metric
$g$, i.e.~as long as we have
$\det\mathring L_\mathrm{prin}(x,p)=-g^{\alpha\beta}(x)\,p_\alpha p_\beta\,$.
It is natural to ask the question: what happens if we choose a different reference
principal symbol $\mathring L_\mathrm{prin}(x,p)$?
The freedom in choosing the reference principal symbol $\mathring L_\mathrm{prin}(x,p)$
is a gauge degree of freedom in our construction and our results
are invariant under changes  of the reference principal symbol,
see Section 6 of~\cite{action} for details.

In order to work effectively with special unitary matrices
we need to choose coordinates on the 3-dimensional Lie group $\mathrm{SU}(2)$.
It is convenient to describe a $2\times2$ special unitary matrix by means of a
spinor $\xi$, i.e.~a pair of complex numbers $\xi^a$, $a=1,2$.
The relationship between
a matrix $R\in\mathrm{SU}(2)$ and a nonzero spinor $\xi$ is given by the formula
\begin{equation}
\label{SU(2) matrix expressed via spinor}
R=
\frac1{\|\xi\|}
\begin{pmatrix}
\xi^1&-\overline{\xi^2}\\
\xi^2&\overline{\xi^1}
\end{pmatrix},
\end{equation}
where the overline stands for complex conjugation
and $\|\xi\|:=\sqrt{|\xi^1|^2+|\xi^2|^2}\,$.

Formula (\ref{SU(2) matrix expressed via spinor})
establishes a one-to-one correspondence between
$\mathrm{SU}(2)$ matrices and nonzero spinors,
modulo a rescaling of the spinor by an arbitrary positive real factor.
The matrices
\begin{equation}
\label{new formula for Pauli matrices in 3D}
\mathring\sigma^\alpha(x):=(\mathring L_\mathrm{prin})_{p_\alpha}(x),\qquad\alpha=1,2,3,
\end{equation}
are the Pauli matrices in our construction.

\begin{remark}
\label{gauge remark}
The gauge-theoretic analysis performed in the current section is
somewhat different from that of Section~\ref{Gauge-theoretic analysis in 4D}.
Namely, the differences are as follows.
\begin{itemize}
\item
In the current section we applied gauge transformations
to the principal symbol whereas in
Section~\ref{Gauge-theoretic analysis in 4D}
we applied gauge transformations to the operator itself.
\item
In the current section we chose a particular principal
symbol as a reference, which led to a somewhat different
definition of Pauli matrices: compare formulae
(\ref{new formula for Pauli matrices in 3D})
and
(\ref{new formula for Pauli matrices}).
\item
In the current section we did not discuss the subprincipal symbol.
\end{itemize}
\end{remark}

\section{A non-geometric representation of the massless Dirac action in 3D}
\label{A non-geometric representation of the massless Dirac action in 3D}

In this section we retain the assumptions
(\ref{m equals two}),
(\ref{n equals three}),
(\ref{principal symbol is trace-free})
and
(\ref{definition of ellipticity}).
In addition, we assume that our manifold $M$ is compact (and without boundary).

We study the eigenvalue problem
\begin{equation}
\label{eigenvalue problem}
Lv=\lambda sv,
\end{equation}
where $s(x)$ is a given positive scalar weight function.
Obviously, the
problem (\ref{eigenvalue problem}) has the same spectrum as the problem
\begin{equation}
\label{eigenvalue problem without weight}
s^{-1/2}Ls^{-1/2}v=\lambda v,
\end{equation}
so it may appear that the weight function $s(x)$ is redundant.
We will, however, work with the eigenvalue problem (\ref{eigenvalue problem})
rather than with (\ref{eigenvalue problem without weight}) because we want our problem
to possess a gauge degree of freedom associated with conformal
scalings of the metric, see Section 5 of \cite{action} for details.
Note also that the subprincipal symbol of the operator
$\,s^{-1/2}Ls^{-1/2}\,$ is $\,s^{-1}L_\mathrm{sub}\,$:
here we are looking at formulae
(\ref{psi transformation of the operator})
and
(\ref{psi transformation of the subprincipal symbol})
with $\psi=-\frac12\ln s$.

The eigenvalue problem (\ref{eigenvalue problem}) can be thought of
as the result of separation of variables
$\,w(x^1,x^2,x^3,x^4)=e^{-i\lambda x^4}v(x^1,x^2,x^3)\,$
in the hyperbolic system
\begin{equation}
\label{dynamic equation most basic with weight}
(-is\,\partial/\partial x^4+L)w=0\,,
\end{equation}
compare with formula (\ref{dynamic equation most basic}).
The operator $-is\,\partial/\partial x^4+L$
appearing in the LHS of  formula (\ref{dynamic equation most basic with weight})
is a special case of the `relativistic' hyperbolic operator
introduced in Section~\ref{Dimension four is special},
its special feature being that it has a naturally defined `time'
coordinate~$x^4$ which does not `mix up' with the `spatial'
coordinates $x^1,x^2,x^3$ (local coordinates on the manifold $M$).

We define the counting function $N(\lambda)$ in the usual way
(\ref{definition of counting function})
as the number of eigenvalues $\lambda_k$
of the problem (\ref{eigenvalue problem})
between zero and a positive~$\lambda$.
The results from Section~\ref{Distribution of eigenvalues and the propagator}
give us explicit formulae for the
coefficients $a$ and $b$ of the asymptotic expansion
(\ref{two-term asymptotic formula for counting function}),
see formulae (\ref{formula for a}) and (\ref{formula for b}),
but these formulae do not have a clear geometric meaning.
Our aim in the current section is to rewrite these formulae
in a geometrically meaningful form.

The coefficients $a$ and $b$ are expressed via
the principal and subprincipal symbols of the operator $L$
as well as the scalar weight function $s(x)$. But in Section
\ref{Gauge-theoretic analysis in 3D} we established
that the principal symbol is described by a metric and
a nonvanishing spinor field $\xi(x)$, with the latter defined modulo
rescaling by an arbitrary positive real function.
We choose to specify the scaling of our spinor field $\xi(x)$
in accordance with the formula
\begin{equation}
\label{normalisation of spinor}
\|\xi(x)\|=s(x).
\end{equation}
The coefficients $a$ and $b$ can now be expressed via
the metric, spinor field
and subprincipal symbol of the operator $L$.

\begin{theorem}
\label{theorem9p1}
The coefficients in the two-term asymptotics
(\ref{two-term asymptotic formula for counting function})
of the counting function
(\ref{definition of counting function})
of the eigenvalue problem
(\ref{eigenvalue problem})
are given by the formulae
\begin{equation}
\label{formula for a in 3D}
a=\frac1{6\pi^2}\int_M
\|\xi\|^3\,\sqrt{\det g_{\alpha\beta}}\ dx\,,
\end{equation}
\begin{equation}
\label{formula for b in 3D}
b=\frac{S(\xi)}{2\pi^2}
-\frac1{4\pi^2}\int_M
\|\xi\|^2\,(\operatorname{tr}L_\mathrm{sub})\,\sqrt{\det g_{\alpha\beta}}\ dx
\,,
\end{equation}
where $S(\xi)$ is the massless Dirac action
with Pauli matrices (\ref{new formula for Pauli matrices in 3D})
and $dx=dx^1dx^2dx^3$.
\end{theorem}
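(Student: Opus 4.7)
The plan is to pass from the weighted eigenvalue problem (\ref{eigenvalue problem}) to the spectrally equivalent problem (\ref{eigenvalue problem without weight}) and apply formula (\ref{formula for a}) together with Theorem~\ref{theorem2p2} to the operator $\tilde L := s^{-1/2} L s^{-1/2}$. Under the transformation (\ref{psi transformation of the operator}) with $e^\psi = s^{-1/2}$ one has $\tilde L_\mathrm{prin} = s^{-1} L_\mathrm{prin}$ and, by (\ref{psi transformation of the subprincipal symbol}), $\tilde L_\mathrm{sub} = s^{-1} L_\mathrm{sub}$. Since $L_\mathrm{prin}(x,p)$ is Hermitian, trace-free and $2\times 2$, its eigenvalues are $\pm h(x,p)$ with $h(x,p)=\sqrt{g^{\alpha\beta}(x)\,p_\alpha p_\beta}$; consequently $\tilde h^{(1)}(x,p)=s^{-1}(x)\,h(x,p)$ and the integration region $\{\tilde h^{(1)}<1\}$ becomes, for fixed $x$, the momentum ellipsoid $\{p:g^{\alpha\beta}p_\alpha p_\beta < s^2(x)\}$. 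Note that the normalised eigenvector $v^{(1)}(x,p)$ is the same for $L_\mathrm{prin}$ and $\tilde L_\mathrm{prin}$.

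For the coefficient $a$ formula (\ref{formula for a}) then reduces, after a linear change of momentum variable, to $(2\pi)^{-3}$ times the volume $\tfrac{4\pi s^3}{3\sqrt{\det g^{\alpha\beta}}} = \tfrac{4\pi s^3}{3}\sqrt{\det g_{\alpha\beta}}$ of this ellipsoid, integrated over $M$; using $\|\xi\|=s$ from (\ref{normalisation of spinor}) gives (\ref{formula for a in 3D}). For $b$ I would split the integrand in (\ref{formula for b}) into three contributions. The first, $[v^{(1)}]^*\tilde L_\mathrm{sub}v^{(1)} = s^{-1}[v^{(1)}]^* L_\mathrm{sub}v^{(1)}$, is handled via the spectral projector identity $v^{(1)}[v^{(1)}]^* = \tfrac12(I+L_\mathrm{prin}/h)$, giving $[v^{(1)}]^* L_\mathrm{sub}v^{(1)} = \tfrac12\operatorname{tr} L_\mathrm{sub} + \tfrac{1}{2h}\operatorname{tr}(L_\mathrm{sub}L_\mathrm{prin})$. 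The second summand is odd in $p$ and vanishes on the symmetric ellipsoid, while the first, being $p$-independent, contributes after multiplication by the ellipsoid volume precisely the second term of (\ref{formula for b in 3D}).

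The main obstacle is matching the two remaining contributions --- those coming from the Poisson bracket terms in (\ref{formula for b}) --- with $S(\xi)/(2\pi^2)$. After factoring out the scalar $s^{-1}(x)$ they combine into
\[
\frac{3i}{2(2\pi)^3}\int_M \frac{dx}{s(x)}\!\int\limits_{g^{\alpha\beta}p_\alpha p_\beta<s^2}\!\!\!\Bigl(\{[v^{(1)}]^*, L_\mathrm{prin}-h, v^{(1)}\} - h\,\{[v^{(1)}]^*, v^{(1)}\}\Bigr)\,dp.
\]
To evaluate this I would use Lemma~\ref{Lemma about two principal symbols} to write $v^{(1)}(x,p) = R^*(x)\,\mathring v^{(1)}(x,p)$, with $R$ expressed through $\xi$ via (\ref{SU(2) matrix expressed via spinor}); the $x$-derivatives of $v^{(1)}$ then split into an $R_{x^\alpha}$-piece, which encodes the spinor connection information carried by $\xi$, and a purely reference piece built from $\mathring v^{(1)}$. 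Expanding the generalised Poisson brackets and carrying out the momentum integration using the identity $\int_{|p|_g<s} p_\alpha p_\beta\, dp = \tfrac{4\pi s^5}{15}\,g_{\alpha\beta}\sqrt{\det g_{\kappa\lambda}}$, together with the vanishing of odd-in-$p$ integrals, produces a density on $M$ built from $\xi$, its first derivatives, the metric, and the Pauli matrices (\ref{new formula for Pauli matrices in 3D}). The crux of the argument is to recognise this density as precisely the integrand of the massless Dirac action $S(\xi)$; this identification, together with the careful bookkeeping of all numerical constants, is the content of Section~5 of \cite{action}.
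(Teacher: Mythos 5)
The paper's own ``proof'' of Theorem~\ref{theorem9p1} is a single sentence deferring everything to Theorem~1.1 of \cite{jst_part_b} and Theorem~1.1 of \cite{action}, so there is no detailed argument in this paper to compare against. Your proposal is a faithful sketch of the work those references actually carry out, and it is consistent with that route rather than a different one. Concretely: passing to $\tilde L=s^{-1/2}Ls^{-1/2}$ with $\tilde L_\mathrm{prin}=s^{-1}L_\mathrm{prin}$, $\tilde L_\mathrm{sub}=s^{-1}L_\mathrm{sub}$, $\tilde h^{(1)}=s^{-1}h$, unchanged eigenvector, and integration region $\{g^{\alpha\beta}p_\alpha p_\beta<s^2\}$ is exactly right; the ellipsoid volume $\tfrac{4\pi}{3}s^3\sqrt{\det g_{\alpha\beta}}$ gives (\ref{formula for a in 3D}); the projector identity $v^{(1)}[v^{(1)}]^*=\tfrac12(I+L_\mathrm{prin}/h)$ together with the vanishing of the odd-in-$p$ piece $\tfrac{1}{2h}\operatorname{tr}(L_\mathrm{sub}L_\mathrm{prin})$ produces the $-\tfrac1{4\pi^2}\int s^2(\operatorname{tr}L_\mathrm{sub})\sqrt{\det g}\,dx$ term with the correct constant; and your combined Poisson-bracket prefactor $\tfrac{3i}{2(2\pi)^3}s^{-1}$ and the moment integral $\int_{|p|_g<s}p_\alpha p_\beta\,dp=\tfrac{4\pi s^5}{15}g_{\alpha\beta}\sqrt{\det g_{\kappa\lambda}}$ all check out.

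The one genuinely hard step — showing that the residual momentum-integrated Poisson-bracket density, after inserting $v^{(1)}(x,p)=R^*(x)\,\mathring v^{(1)}(x,p)$ and (\ref{SU(2) matrix expressed via spinor}), is exactly the massless Dirac action density — you do not prove but explicitly attribute to Section~5 of \cite{action}. That is the same level of detail as the paper itself, which likewise delegates this to \cite{action}, so I would not call it a gap in the sense of a missing idea; it is the cited computation. If you wanted a self-contained argument you would need to reproduce that calculation, including the careful treatment of the gauge freedom in the choice of reference principal symbol $\mathring L_\mathrm{prin}$ and the phase of $v^{(1)}$, which your sketch flags but does not carry out.
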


Theorem \ref{theorem9p1} follows from
Theorem 1.1 of \cite{jst_part_b}
and Theorem 1.1 of \cite{action}.
The massless Dirac action is defined in Appendix A of \cite{action}.

Theorem \ref{theorem9p1} allows us to define the concept of
massless Dirac action in dimension three in a non-geometric way.
Namely, if the subprincipal symbol of the operator $L$ is zero,
then the second asymptotic coefficient of the counting function
is, up to the factor $\frac1{2\pi^2}$,
the massless Dirac action.

\section{The covariant subprincipal symbol in 3D}
\label{The covariant subprincipal symbol in 3D}

The assumptions in this section are the same as in
Section~\ref{A non-geometric representation of the massless Dirac action in 3D}.

As pointed out in Remark~\ref{gauge remark},
the gauge-theoretic analysis performed in
Sections
\ref{Gauge-theoretic analysis in 3D}
and
\ref{Gauge-theoretic analysis in 4D}
is somewhat different.
In this section we rewrite
Theorem~\ref{theorem9p1}
in the gauge-theoretic language of Section~\ref{Gauge-theoretic analysis in 4D}.

The central element of the gauge-theoretic analysis
of Section~\ref{Gauge-theoretic analysis in 4D}
was the notion of covariant subprincipal symbol,
see formula (\ref{definition of covariant subprincipal symbol}) for definition.
This definition of covariant subprincipal symbol
works equally well in dimension three, only it becomes
slightly simpler. Namely, observe that if $P$ is a $2\times2$ trace-free
matrix, then $\operatorname{adj}P=-P$.
Hence, the definition of the covariant subprincipal symbol can now
be rewritten as
\begin{equation}
\label{definition of covariant subprincipal symbol 3D}
L_\mathrm{csub}:=
L_\mathrm{sub}
-\frac i{16}\,
g_{\alpha\beta}
\{
L_\mathrm{prin}
,
L_\mathrm{prin}
,
L_\mathrm{prin}
\}_{p_\alpha p_\beta}.
\end{equation}

Let us define the massless Dirac operator on half-densities in
accordance with formula (A.19) from \cite{jst_part_b}.

\begin{lemma}
\label{lemma about massless Dirac operator on half-densities}
Our operator $L$ is a massless Dirac operator on half-densities
if and only if $L_\mathrm{csub}(x)=0$.
\end{lemma}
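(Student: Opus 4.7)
The plan is to prove the equivalence by computing the covariant subprincipal symbol of a candidate massless Dirac operator explicitly and showing that it vanishes, then invoking the uniqueness of an operator with prescribed principal and covariant subprincipal symbol to obtain both directions.

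First, I would set up the reduction using the gauge framework of Section~\ref{Gauge-theoretic analysis in 3D}. By Lemma~\ref{Lemma about two principal symbols}, any admissible principal symbol can be written as $L_\mathrm{prin}(x,p)=R^*(x)\mathring{L}_\mathrm{prin}(x,p)R(x)$ for a fixed reference $\mathring{L}_\mathrm{prin}$ and an $\mathrm{SU}(2)$-valued field $R$; equivalently by a spinor field $\xi$ via formula (\ref{SU(2) matrix expressed via spinor}). By Lemma~\ref{Lemma about covariant subprincipal symbol} the covariant subprincipal symbol transforms covariantly under the action (\ref{SL2C transformation of the operator}). Consequently, the condition $L_\mathrm{csub}=0$ is invariant under $\mathrm{SU}(2)$ gauge transformations, and so is the property of being a massless Dirac operator on half-densities (since the Dirac operator built from $L_\mathrm{prin}$ and from $R^*\mathring{L}_\mathrm{prin}R$ differ by the same conjugation). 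This reduces the problem to checking the statement for one convenient representative in each gauge orbit.

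Next, I would write the massless Dirac operator on half-densities using formula (A.19) of \cite{jst_part_b} in local coordinates. Its leading part is $-i\mathring{\sigma}^\alpha R^*(\partial/\partial x^\alpha)R$, whose principal symbol matches $L_\mathrm{prin}$. The lower order terms encode the Levi-Civita spinor connection coefficients together with the contribution from acting on half-densities. I would then compute its ordinary subprincipal symbol $L_\mathrm{sub}$ by (\ref{definition of the subprincipal symbol}) and insert into (\ref{definition of covariant subprincipal symbol 3D}). The technical heart of the argument is to verify that the Poisson bracket correction term
\[
-\frac{i}{16}\,g_{\alpha\beta}\,\{L_\mathrm{prin},L_\mathrm{prin},L_\mathrm{prin}\}_{p_\alpha p_\beta}
\]
evaluates, after expansion with $L_\mathrm{prin}=R^*\mathring{L}_\mathrm{prin}R$ and use of the relation $\mathring{\sigma}^\alpha\mathring{\sigma}^\beta+\mathring{\sigma}^\beta\mathring{\sigma}^\alpha=-2Ig^{\alpha\beta}$ together with $\operatorname{adj}P=-P$ for trace-free $P$, to exactly the negative of the Dirac subprincipal contribution coming from the Levi-Civita spinor connection coefficients. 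This is the three-dimensional analogue of the identification sketched at the end of Section~\ref{A non-geometric representation of the massive Dirac equation in 4D} and carried out in detail in \cite{nongeometric}, only simplified by trace-freeness.

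Having shown that a massless Dirac operator has $L_\mathrm{csub}=0$, the converse is immediate: the principal symbol $L_\mathrm{prin}$ of any admissible $L$ determines, via the spinor field $\xi$ obtained from Lemma~\ref{Lemma about two principal symbols}, a unique candidate Dirac operator $L_\mathrm{Dirac}$. By construction, $L_\mathrm{Dirac}$ has the same principal symbol as $L$ and, by the above calculation, zero covariant subprincipal symbol. Since an operator of the class considered is determined uniquely by the pair $(L_\mathrm{prin},L_\mathrm{csub})$, as noted after Lemma~\ref{Lemma about covariant subprincipal symbol}, the hypothesis $L_\mathrm{csub}=0$ forces $L=L_\mathrm{Dirac}$. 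The main obstacle is therefore the concrete verification that the triple Poisson bracket term reproduces the spinor Levi-Civita connection; this is a routine but delicate computation analogous to the 4D case, and I would handle it by working at a point where $R(x_0)=I$ and propagating the identity via gauge covariance.
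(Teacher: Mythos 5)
Your proposal is correct and follows essentially the same route as the paper: the crux in both cases is the identity $L_\mathrm{sub}^{\mathrm{Dirac}}=\frac{i}{16}\,g_{\alpha\beta}\{L_\mathrm{prin},L_\mathrm{prin},L_\mathrm{prin}\}_{p_\alpha p_\beta}$, after which the ``if'' direction is immediate from (\ref{definition of covariant subprincipal symbol 3D}) and the ``only if'' direction follows from the fact that $(L_\mathrm{prin},L_\mathrm{csub})$ uniquely determines $L$. The paper simply cites Section~6 of \cite{jst_part_b} for the explicit subprincipal symbol of the massless Dirac operator on half-densities and then states that a straightforward calculation identifies it with the Poisson-bracket correction term; you propose to re-derive that identity yourself via a gauge-fixed computation at a point with $R(x_0)=I$, and you make the converse explicit. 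Two small caveats worth noting: Lemma~\ref{Lemma about two principal symbols} is only a local (perturbative) statement, so for the converse it is cleaner to build $L_\mathrm{Dirac}$ directly from the Pauli matrices $(L_\mathrm{prin})_{p_\alpha}$ rather than via the spinor field from that lemma; and the $\mathrm{SL}(2,\mathbb{C})$-covariance of $L_\mathrm{csub}$ (Lemma~\ref{Lemma about covariant subprincipal symbol}) was stated in the 4D setting, so the gauge-reduction scaffolding implicitly relies on the observation (made only informally in Section~\ref{The covariant subprincipal symbol in 3D}) that the covariance carries over to 3D. Neither caveat is a genuine gap; the gauge reduction is dispensable scaffolding, and the core identity is exactly what the paper verifies.
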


\begin{proof}
The subprincipal symbol of a massless Dirac operator on half-densities
was calculated explicitly in Section 6 of \cite{jst_part_b}.
Straightforward calculations show that this explicit formula
can be rewritten as
$
\frac i{16}\,
g_{\alpha\beta}
\{
L_\mathrm{prin}
,
L_\mathrm{prin}
,
L_\mathrm{prin}
\}_{p_\alpha p_\beta}
$.
\end{proof}

We can now reformulate Theorem \ref{theorem9p1} in the following
equivalent form

\begin{theorem}
\label{theorem10p2}
The coefficients in the two-term asymptotics
(\ref{two-term asymptotic formula for counting function})
of the counting function
(\ref{definition of counting function})
of the eigenvalue problem
(\ref{eigenvalue problem})
are given by the formulae
\begin{equation}
\label{formula for a in 3D alternative}
a=\frac1{6\pi^2}\int_M
s^3\,\sqrt{\det g_{\alpha\beta}}\ dx\,,
\end{equation}
\begin{equation}
\label{formula for b in 3D alternative}
b=
-\frac1{4\pi^2}\int_M
s^2\,(\operatorname{tr}L_\mathrm{csub})\,\sqrt{\det g_{\alpha\beta}}\ dx
\,.
\end{equation}
\end{theorem}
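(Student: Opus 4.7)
The plan is to derive Theorem \ref{theorem10p2} as an algebraic reformulation of Theorem \ref{theorem9p1} using the definition (\ref{definition of covariant subprincipal symbol 3D}) of $L_{\mathrm{csub}}$ together with the identification established in the proof of Lemma \ref{lemma about massless Dirac operator on half-densities}. Formula (\ref{formula for a in 3D alternative}) is immediate from (\ref{formula for a in 3D}) via the normalisation (\ref{normalisation of spinor}): $\|\xi(x)\|^3 = s(x)^3$ pointwise, so the two integrands for $a$ coincide.

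For the second coefficient, I would take the trace of (\ref{definition of covariant subprincipal symbol 3D}), multiply by $s^2\sqrt{\det g_{\alpha\beta}}$ and integrate over $M$ to obtain
\begin{equation*}
\int_M s^2(\operatorname{tr}L_{\mathrm{sub}})\sqrt{\det g_{\alpha\beta}}\,dx
=\int_M s^2(\operatorname{tr}L_{\mathrm{csub}})\sqrt{\det g_{\alpha\beta}}\,dx
+\frac{i}{16}\int_M s^2 g_{\alpha\beta}\operatorname{tr}\{L_{\mathrm{prin}},L_{\mathrm{prin}},L_{\mathrm{prin}}\}_{p_\alpha p_\beta}\sqrt{\det g_{\alpha\beta}}\,dx.
\end{equation*}
Substituting this into (\ref{formula for b in 3D}) replaces the $-\frac{1}{4\pi^2}\int s^2\operatorname{tr}L_{\mathrm{sub}}$ term by $-\frac{1}{4\pi^2}\int s^2\operatorname{tr}L_{\mathrm{csub}}$ plus a residual Poisson-bracket integral, and reduces Theorem \ref{theorem10p2} to the single identity
\begin{equation*}
\frac{S(\xi)}{2\pi^2}
=\frac{i}{64\pi^2}\int_M s^2 g_{\alpha\beta}\operatorname{tr}\{L_{\mathrm{prin}},L_{\mathrm{prin}},L_{\mathrm{prin}}\}_{p_\alpha p_\beta}\sqrt{\det g_{\alpha\beta}}\,dx.
\end{equation*}

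The key step is to recognise this last identity as a direct consequence of Lemma \ref{lemma about massless Dirac operator on half-densities}. The proof of that lemma (via Section~6 of \cite{jst_part_b}) identifies the Poisson-bracket expression above as the subprincipal symbol of the massless Dirac operator on half-densities constructed from the Pauli matrices $\mathring\sigma^\alpha$ of Section \ref{Gauge-theoretic analysis in 3D}. Pairing this subprincipal symbol with the spinor $\xi$ rescaled according to (\ref{normalisation of spinor}) and integrating reproduces, up to the universal factor $\frac{1}{2\pi^2}$, the massless Dirac action $S(\xi)$ as defined in Appendix~A of \cite{action}.

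The main obstacle is bookkeeping: verifying that the numerical prefactors, the complex-conjugation conventions, and the half-density weightings match between the Poisson-bracket formula (\ref{definition of covariant subprincipal symbol 3D}), the explicit subprincipal symbol computation in \cite{jst_part_b}, and the Lagrangian density defining $S(\xi)$ in \cite{action}. Once those conventions are aligned, the remaining identity is a routine algebraic check and requires no new analytic input beyond the two companion theorems already invoked.
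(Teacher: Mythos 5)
Your reduction is the right one and matches the paper's route structurally: both proofs derive the theorem from (\ref{formula for b in 3D}), (\ref{normalisation of spinor}), (\ref{definition of covariant subprincipal symbol 3D}) and Lemma~\ref{lemma about massless Dirac operator on half-densities}, and your algebra correctly isolates the residual identity
\begin{equation*}
\frac{S(\xi)}{2\pi^2}
=\frac{i}{64\pi^2}\int_M s^2 g_{\alpha\beta}\operatorname{tr}\{L_{\mathrm{prin}},L_{\mathrm{prin}},L_{\mathrm{prin}}\}_{p_\alpha p_\beta}\sqrt{\det g_{\alpha\beta}}\,dx
\end{equation*}
as the crux. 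However, there is a genuine gap in how you close that identity. You claim it is a ``direct consequence'' of Lemma~\ref{lemma about massless Dirac operator on half-densities}, but the lemma only tells you that the Poisson bracket expression equals $D_\mathrm{sub}$ for the massless Dirac operator $D$ sharing the principal symbol $L_\mathrm{prin}$; it says nothing about the relation between $\int_M s^2\operatorname{tr}D_\mathrm{sub}\sqrt{\det g}\,dx$ and the Dirac action $S(\xi)$. Equating a weighted trace-integral of the subprincipal symbol with the action functional is a separate, nontrivial fact, and the phrase ``pairing this subprincipal symbol with the spinor $\xi$ and integrating reproduces $S(\xi)$'' is an assertion rather than an argument --- the objects do not literally pair in the way the phrase suggests.

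The paper fills exactly this hole by invoking Theorem~1.2 from \cite{jst_part_b} as an additional ingredient (over and above the lemma), which you omit. One concrete way to see why an independent input is needed: applying (\ref{formula for b in 3D}) to the Dirac operator $D$ itself gives $b[D]=\frac{S(\xi)}{2\pi^2}-\frac{1}{4\pi^2}\int_M s^2\operatorname{tr}D_\mathrm{sub}\sqrt{\det g}\,dx$, so the residual identity is equivalent to the statement that $b[D]=0$ for the massless Dirac operator with this weight; that is a theorem about the Dirac operator's spectral asymptotics, not a bookkeeping check of prefactors and conventions. Your proposal would be complete if you replaced the appeal to the lemma alone by the combination of the lemma with this external spectral result.
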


\begin{proof}
Formula
(\ref{formula for a in 3D alternative})
is a consequence of
formulae
(\ref{formula for a in 3D})
and
(\ref{normalisation of spinor}).
Formula
(\ref{formula for b in 3D alternative})
is a consequence of
formulae
(\ref{formula for b in 3D}),
(\ref{normalisation of spinor}),
(\ref{definition of covariant subprincipal symbol 3D})
and
Lemma~\ref{lemma about massless Dirac operator on half-densities}
from the current paper
and Theorem 1.2 from \cite{jst_part_b}.
\end{proof}

\bibliographystyle{amsplain}

\end{document}